\documentclass[a4paper,12pt]{amsart}
\relpenalty=9999
\binoppenalty=9999
\usepackage{standalone}
\usepackage{amsfonts}
\usepackage{amsthm}
\usepackage{amssymb}
\usepackage{amsmath}
\usepackage{theoremref}
\usepackage{enumerate}
\usepackage{bbm}
\usepackage{bm}
\usepackage{pgfplots}
\usepgfplotslibrary{fillbetween}
\usetikzlibrary{intersections}
\usepackage{mathtools}

\newcommand{\C}{\mathcal{C}}

\newcommand{\T}{\mathbb{T}}

\newcommand{\conj}[1]{\overline{#1}}
\newcommand{\D}{\mathbb{D}}

\newcommand{\Po}{\mathcal{P}}
\newcommand{\cD}{\conj{\mathbb{D}}}

\newcommand{\m}{\textit{m}}

\renewcommand\Re{\operatorname{Re}}

\newtheorem{mainthm}{Theorem}

\newtheorem{thm}{Theorem}[section]
\newtheorem{lem}[thm]{Lemma}

\newtheorem{cor}[thm]{Corollary}
\newtheorem{prop}[thm]{Proposition}
\theoremstyle{definition}

\theoremstyle{definition}

\newtheorem{defn}{Definition}[section]

\begin{document}
\title[Mean-square approximation by polynomials]{Revisiting mean-square approximation by polynomials in the unit disk}
\author{Bartosz Malman}
\address{Division of Mathematics and Physics, 
        Mälardalen University,
		Västerås, Sweden}
\email{bartosz.malman@mdu.se} 

   
\begin{abstract}
For a positive finite Borel measure $\mu$ compactly supported in the complex plane, the space $\Po^2(\mu)$ is the closure of the analytic polynomials in the Lebesgue space $L^2(\mu)$. According to Thomson's famous result, any space $\Po^2(\mu)$ decomposes as an orthogonal sum of pieces which are essentially analytic, and a residual $L^2$-space.
We study the structure of this decomposition for a class of Borel measures $\mu$ supported on the closed unit disk $\cD$ for which the part $\mu_\D$, living in the open disk $\D$, is radial and decreases at least exponentially fast near the boundary of the disk. For the considered class of measures, we give a precise form of the Thomson decompsition. In particular, we confirm a conjecture of Kriete and MacCluer from 1990, which gives an analog to Szegö's classical theorem.
\end{abstract}

\maketitle

\section{Introduction}

The classical and highly influential theorem of Szegö states that the divergence of a \textit{logarithmic integral}, namely \begin{equation} \label{logwDivergentIntegral}
\int_\T \log w \, d\m = -\infty,
\end{equation} is necessary and sufficient for the set $\mathcal{P}$ of analytic polynomials to be dense in the Lebesgue space $L^2(\mu)$, where $d\mu = w \, d\m$. Here $w$ is a non-negative Borel measurable function (a \textit{weight}) on the unit circle $\T = \{ z \in \mathbb{C} : |z| = 1 \}$, $m$ is the Lebesgue measure on $\T$, and $L^2(\mu)$ is the space of measurable functions $f$ satisfying the integrability condition \[ \|f\|^2_{L^2(\mu)} := \int_\T |f|^2 w \, d\m < \infty.\] 
Szegö's result has important consequences and applications in operator and spectral theory, complex analysis, and theory of stochastic processes, for instance. 

This paper continues a line of research dealing with the following question: what is the closure of $\mathcal{P}$ in $L^2(\mu)$ if the measure $d\mu = d\mu_\T = w \, d\m$ appearing in \eqref{logwDivergentIntegral} is enlarged to include also a part on the open disk $\D = \{ z \in \mathbb{C} : |z| < 1 \}$? If $\mu = \mu_\D + \mu_\T$, with the two pieces living on $\D$ and $\T$ respectively, then we expect to need more than the condition \eqref{logwDivergentIntegral} if, for instance, we want to conclude that any function in $L^2(\mu_\T)$ lies in the closure of $\mathcal{P}$ in $L^2(\mu)$.

There is an extensive literature dealing with this question, and results in this direction have interesting applications in other parts of analysis. We will review some of the previous results and applications below. The main result of the present paper is \thref{maintheoremSuperExp} below. It is a structural result on the span of the analytic polynomials in $L^2(\mu)$, in the case when the part of $\mu$ living on the unit disk decreases sufficiently fast near the boundary of $\D$. It can be seen as an analog to Szegö's theorem. 

\subsection{Background and some notation}

The set of analytic polynomials $\mathcal{P}$ consists of functions of the form $p(z) = \sum_{n=0}^N p_n z^n$ where $z \in \mathbb{C}$ is the complex variable. By $\Po^t(\mu)$ we denote the norm-closure of $\mathcal{P}$ in the usual Lebesgue space $L^t(\mu)$. Here $t$ is a finite positive number, and $\mu$ is a finite, positive Borel measure which is compactly supported in the plane. We shall for the most part be dealing with the case $t=2$ (but some of our results will be applicable in the case $t\neq 2$ also) and with measures $\mu$ supported on the closed unit disk $\cD = \{ z \in \mathbb{C} : |z| \leq 1\}$ of a particular structure. Before specializing to our intended context, we give a brief review of some facets of the general theory.

Thomson in \cite{thomson1991approximation} solved a long-standing problem: $\Po^t(\mu) \neq L^t(\mu)$ if and only if there exists an open set $U$ consisting of \textit{points of bounded evaluation} for $\mu$, in the sense that \[ |p(\lambda)|^t \leq C \int |p|^t d\mu\] holds for $p \in \mathcal{P}$ and some $C > 0$ independent of $p$ and $\lambda \in U$. More precisely, there exists a decomposition of the measure $\mu = \sum_i \mu_i$, each piece living on a different subset of the support of $\mu$, such that \begin{equation}
    \label{ThomsonTheoremDecomp} \Po^t(\mu) = \Big(\oplus_{i \geq 1} \Po^t(\mu_i)\Big) \oplus L^t(\mu_0)
\end{equation} where for the pieces corresponding to $i \geq 1$ there exists an open simply connected domain $U_i$ consisting of points of bounded evaluation for $\mu_i$. These pieces are also \textit{irreducible} in the sense that they do not contain any non-trivial characteristic functions. For a Borel set $A$, the characteristic function $1_A$ is defined, as usual, by \[ 1_A(x) = \begin{cases} 1, \quad x \in A \\ 0, \quad x \not\in A \end{cases}\] and it is \textit{trivial} for $\mu_i$ if it equals identically $0$ or $1$ almost everywhere with respect to $\mu_i$. In concrete applications, the pieces $\Po^t(\mu_i)$ are often easily identified as spaces of analytic functions on the domain $U_i$.

\subsection{The considered class of measures}

Thomson's theorem is a cornerstone of the theory of $\Po^t(\mu)$ spaces. However, it is hard to apply in specific examples appearing in the applications in which the exact identification of the pieces $\mu_i$ in \eqref{ThomsonTheoremDecomp} is necessary. Such applications are found, for instance, in Fourier analysis and the theory of the Cauchy integral operator (\cite{khrushchev1978problem}, \cite{malman2022thomson}) and also in the theory of de Branges-Rovnyak spaces (\cite{ConstrFamiliesSmoothCauchyTransforms}, \cite{DBRpapperAdem}). In these applications, the related space $\Po^2(\mu)$ splits into two pieces: $\Po^2(\mu_1)$, which is a space of analytic functions on $\D$, and a full space $L^2(\mu_0)$, with $\mu_0$ supported on the circle $\T$. This will also be the case in our context.

If $F$ is a Borel set, then by $\mu_F$ we will denote the restriction of $\mu$ to $F$. The Lebesgue space $L^t(\mu_F)$ can be regarded as a closed subspace of $L^t(\mu)$ in the obvious way. The measures $\mu$ in our study will live on the closed disk $\cD = \D \cup \T$, and be composed of two pieces $\mu = \mu_\D + \mu_\T$. The part $\mu_\T$, living on the circle $\T$, will be a general absolutely continuous measure \begin{equation}
    \label{MuTform}
    d\mu_\T = w \, d\m 
\end{equation} where $w$ is a non-negative Borel measurable function, and $d\m$ is the Lebesgue measure (arclength measure) on $\T$, normalized by the condition $\m(\T) = 1$. We assume the global divergence of the logarithmic integral \eqref{logwDivergentIntegral}, since it is known from elementary Hardy space theory that convergence in \eqref{logwDivergentIntegral} means that $\Po^t(\mu)$ is essentially the image of the Hardy space $H^t$ under a multiplication operator. In our main result we shall show how instead the \textit{local} integrability of $\log w$ shapes the structure of \eqref{ThomsonTheoremDecomp}. 

The part $\mu_\D$ living on $\D$ will be of the form \begin{equation}
    \label{MuDform}
    d\mu_\D(z) = G(1-|z|)dA(z),
\end{equation} where $G$ is some increasing function defined for $x$ between $0$ and $1$ and satisfying $G(0) = 0$, and $dA =\pi^{-1} dxdy$ is the normalized area measure on $\D$. We will assume one lower bound on the rate of decrease of $G$ as $x$ tends to zero, and one upper bound. Namely, we require that $G$ has at least exponential decay, and we express this by
\begin{equation} \label{ExpDecTag}
\liminf_{x \to 0^+} \, x \log(1/G(x)) > 0. \tag{ExpDec}    
\end{equation}
The upper bound will be an integrability condition 
\begin{equation} \label{LogLogIntTag}
\int_0^1 \log \log (1/G(x)) \, dx < \infty. \tag{LogLogInt}  \end{equation}
For the condition \eqref{LogLogIntTag} to make sense, we must of course have  that $\log(1/G(x)) > 0$, or in other words $G(x) < 1$. This we can always assume by reshaping the function $G$ on $(c, 1]$ for some positive $c$. Such an operation introduces an equivalent norm on $\Po^t(\mu)$, and does not affect any of our results. 

\subsection{Main result} Let $\mu$ be given by \begin{align}  d\mu & = d\mu_\D + d\mu_\T \label{mustructre} \\ & =  G(1-|z|)dA(z) + w \, d\m. \nonumber \end{align} Throughout the paper the measure $\mu$ will have this form. The main theorem states, roughly speaking, that the two assumptions \eqref{ExpDecTag} and \eqref{LogLogIntTag} place us in the correct range of functions $G$ for which the structure of $\Po^2(\mu)$ is determined by local integrability of the logarithm of $w$ on intervals $I$ (or, in other words, arcs) of $\T$. 

\begin{defn} Let $E \subseteq \T$ be the carrier set of the weight $w$: 
\begin{equation} \label{carrierwEdef}
E := \{ x \in \T : w(x) > 0 \}    
\end{equation} and define the \textit{residual set} of $w$ to consist of those $x \in E$ which are not contained in any (say, open) interval $I$ on which $\log w$ is integrable: \begin{equation} \label{reswdef}
F := \Big\{ x \in E: \int_I \log w \, d\m = -\infty \text{ whenever $x \in I$} \Big\}.    
\end{equation}
\end{defn}
It is not hard to verify that the residual set $F$ is (up to a set of $m$-measure zero) the complement in $E$ of a countable union of intervals on which $\log w$ is integrable. 

The following is our structure theorem for the considered class of $\Po^2(\mu)$-spaces, and the main result of the paper.

\begin{mainthm} \thlabel{maintheoremSuperExp}
Let $\mu$ have the form \eqref{mustructre}, with $G$ being an increasing function which is continuously differentiable and satisfies the conditions \eqref{ExpDecTag} and \eqref{LogLogIntTag}. Let $F \subseteq \T$ be the residual set of $w$ given by \eqref{reswdef}, and decompose $\mu$ as \[\mu = (\mu_\D + \mu_{\T \setminus F}) + \mu_F,\] where $\mu_{\T \setminus F}$ and $\mu_F$ are the restrictions of $\mu_\T$ to the sets $\T \setminus F$ and $F$, respectively. Then \begin{equation} \label{MainTheoremDecompositionEquation} \Po^2(\mu) = \Po^2(\mu_\D + \mu_{\T \setminus F}) \oplus L^2(\mu_F),\end{equation} where $\Po^2(\mu_\D + \mu_{\T \setminus F})$ is irreducible. 
\end{mainthm}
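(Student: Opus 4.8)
The plan is to prove the two assertions separately: first the orthogonal direct-sum decomposition \eqref{MainTheoremDecompositionEquation}, and then the irreducibility of the first summand. The decomposition has two halves. The containment $\Po^2(\mu) \supseteq \Po^2(\mu_\D + \mu_{\T \setminus F}) \oplus L^2(\mu_F)$ requires showing that every function in $L^2(\mu_F)$ is approximable by polynomials \emph{in the $L^2(\mu)$ norm}. The idea here is to work locally: since $F$ is, up to measure zero, the complement in $E$ of a countable union of arcs on which $\log w$ is integrable, on each such complementary arc $I$ the logarithmic integral $\int_I \log w \, d\m$ diverges. On an arc where $\log w$ is \emph{not} integrable, one expects a local version of Szeg\H{o}'s theorem to force the polynomials to be dense in $L^2(w\,d\m|_I)$; the contribution of $\mu_\D$ near that arc must be shown not to obstruct this, which is exactly where the upper bound \eqref{LogLogIntTag} on $G$ enters, guaranteeing the disk mass is thin enough to impose no extra constraint. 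I would assemble these local approximations into a global one using a partition of $\T$ together with the divergence of the logarithmic integral on each piece of $F$.

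For the reverse containment, $\Po^2(\mu) \subseteq \Po^2(\mu_\D + \mu_{\T \setminus F}) \oplus L^2(\mu_F)$, and the orthogonality of the two summands, the natural route is through Thomson's theorem \eqref{ThomsonTheoremDecomp}. The strategy is to identify precisely which points are points of bounded evaluation for $\mu$. I expect to show that $\D$ consists of bounded point evaluations, producing a single analytic piece $\Po^2(\mu_1)$ supported on $\mu_\D + \mu_{\T\setminus F}$, while the residual $L^2(\mu_0)$ piece is carried exactly by $F$. Concretely, one shows that a polynomial controlled in $L^2(\mu)$ admits analytic continuation across each arc $I$ on which $\log w$ is integrable (using a Cauchy-transform or harmonic-majorant argument on the subharmonic function $\log|p|$), which places the boundary values on $\T \setminus F$ into the analytic piece; conversely, on $F$ no such control is available, so $F$ carries the full $L^2$ part. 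The characteristic function $1_F$ should then be shown to be the reducing function separating the two summands, which simultaneously yields orthogonality.

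The irreducibility of $\Po^2(\mu_\D + \mu_{\T \setminus F})$ amounts to showing it contains no non-trivial characteristic function $1_A$. Since $\mu_\D$ is radial and supported on all of $\D$, any function in the space is determined by its values on the disk, where bounded point evaluations make it analytic; a characteristic function that is analytic on the connected domain $\D$ must be constant $\m$-a.e., hence trivial. The careful point is ruling out a splitting supported on the boundary part $\T \setminus F$ alone, which again reduces to the analytic continuation across the integrability arcs tying the boundary values to the disk.

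The main obstacle I anticipate is the local density half of the first containment: establishing that the exponential-type decay controlled between \eqref{ExpDecTag} and \eqref{LogLogIntTag} is exactly the regime in which the radial disk mass $\mu_\D$ neither helps nor hinders the polynomial approximation dictated by the divergence of $\int_I \log w \, d\m$. Quantifying this requires delicate weighted estimates — likely constructing explicit approximating polynomials or outer-type functions whose growth near $\T$ is calibrated against $G$, and verifying that \eqref{LogLogIntTag} is the sharp integrability threshold ensuring the construction converges. This balancing of the disk and circle contributions is where the substance of the theorem lies.
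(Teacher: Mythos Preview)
Your proposal reverses the roles of the two hypotheses on $G$, and this is not cosmetic. In the paper it is the \emph{lower} bound \eqref{ExpDecTag}, not \eqref{LogLogIntTag}, that drives the inclusion $L^2(\mu_F) \subset \Po^2(\mu)$. The mechanism is as follows: one constructs outer functions $g_N \in H^\infty$ whose boundary moduli collapse on $F$ while, by a Poisson-integral estimate over a Vitali cover of $F$ by short arcs on which $\int_I \log w\,d\m = -\infty$, their growth in $\D$ is at most $\exp\big(C/(1-|z|)\big)$. Condition \eqref{ExpDecTag} says precisely that $G(1-|z|) \lesssim \exp\big(-d/(1-|z|)\big)$, so this growth is absorbed by the weight and the $g_N$ remain bounded in $L^2(\mu_\D)$. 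A weak limit yields an element of $\Po^2(\mu)$ equal to $1$ on $\D$ and $0$ on $F$; subtracting $1$ and taking the shift-invariant subspace generated gives $L^2(\mu_F)$. No local Szeg\H{o} argument is used, and \eqref{LogLogIntTag} does not enter here at all.

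Conversely, \eqref{LogLogIntTag} is what secures the irreducibility of $\Po^2(\mu_\D + \mu_{\T \setminus F})$. Your plan to exclude a nontrivial $1_A$ by saying ``analytic on $\D$ implies constant'' is circular: if the space did split, the restriction to $\D$ would not determine the function, and a characteristic function supported on $\T \setminus F$ is the very thing you must rule out. The paper instead constructs, for each arc $I$ with $\int_I \log w\,d\m > -\infty$ and each interior point of $I$, an explicit nonzero element of $\Po^2(\mu_\D) \oplus L^2(\mu_\T)$ orthogonal to $\Po^2(\mu)$ whose $\T$-part is nonvanishing on a subarc around that point. Building it requires a smooth function of small support whose Fourier coefficients decay like the moments $\alpha_n$ of $G$; \eqref{LogLogIntTag} is exactly the hypothesis that makes the (easy case of the) Beurling--Malliavin multiplier theorem deliver such a function. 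A short orthogonality argument then forbids any characteristic function supported in $I$.

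Finally, once $L^2(\mu_F) \subset \Po^2(\mu)$ is known, the decomposition \eqref{MainTheoremDecompositionEquation} is immediate: $L^2(\mu_F)$ is a reducing subspace, and its orthogonal complement in $\Po^2(\mu)$ is $\Po^2(\mu_\D + \mu_{\T \setminus F})$ essentially by definition. Your proposed detour through Thomson's theorem and bounded point evaluations is not needed for this step.
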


The span of the functions $G$ to which \thref{maintheoremSuperExp} applies is large, and starts roughly with $G(x) = \exp(-c/x)$ for any $c > 0$ (inclusive), and ends at $G(x) = \exp \exp(-c/x)$, which falls just short of the integrability assumption \eqref{LogLogIntTag}. This range is seen to be rather sharp by consideration of previous work of Khrushchev and Volberg, which we discuss below. An absolutely sharp dichotomy is very hard to achieve because of the various needed regularity conditions on $G$ which appear (and seem to be necessary) in the different works. However, the differentiability assumption on $G$ is introduced mainly for convenience. It can be avoided at the cost of more technical proofs and estimates, and in any case, some other regularity assumption.

We remark that the part of the theorem which asserts the inclusion $L^2(\mu_F) \subset \Po^2(\mu)$ holds also for any finite $t \neq 2$ (see \thref{FwSplittingOffProp}). The proof of irreducibility of the other piece in \eqref{MainTheoremDecompositionEquation} uses an adaptation from \cite{kriete1990mean} of a Hilbert space argument.

The piece $\Po^2(\mu_\D + \mu_{\T \setminus F})$ can be identified with a space of analytic functions on $\D$ in which the analytic polynomials are dense. The deep work of Aleman, Richter and Sundberg in \cite{aleman2009nontangential} explains the boundary behaviour, zero sets, and other properties of the functions contained in this space.

\subsection{Work of Kriete and MacCluer} The present paper is inspired by results and conjectures in \cite{kriete1990mean}, where Kriete and MacCluer study the so-called \textit{splitting problem} for measures of form similar to the ones appearing here. \textit{Splitting} is said to occur for a measure $\mu$ of the form \eqref{mustructre} if the the space $\Po^t(\mu)$ decomposes into orthogonal pieces living on $\D$ and $\T$, respectively. The method of Kriete and MacCluer is based on estimations of certain composition operators on Bergman spaces, and is much different from ours. It allowed them to establish results which shed light on what the definitive structure of $\Po^t(\mu)$ might be. 

Among other interesting results, they noted that logarithmic integrability of $w$ on an interval prohibits splitting if $G$ satisfies \eqref{LogLogIntTag} (we re-use parts of their argument in the proof of \thref{maintheoremSuperExp}), and they found also that a certain change of behaviour occurs, roughly, at \eqref{ExpDecTag}. Namely, results of \cite{kriete1990mean} indicate that behaviour of $\log w$ on intervals is key if \eqref{ExpDecTag} is satisfied, while if this limit is zero, then more complicated sets play a role. Based on these observations, they pose two conjectures (see Section 9 in \cite{kriete1990mean}). One of these conjectures is that splitting occurs if $\int_I \log w \, d\m = -\infty$ for every interval $I$ and if $G$ satisfies a slightly stronger version of \eqref{ExpDecTag}. Their condition on the pervasive non-integrability of $\log w$ is nothing more than the statement that $F = E$ in \eqref{reswdef}. Thus our main result confirms Conjecture 2 from \cite[Section 9]{kriete1990mean}.

\begin{mainthm} \thlabel{KrieteMaccluerConjectureProof}
    Assume that $G$ satisfies \eqref{ExpDecTag} and that \[ \int_I \log w \, d\m = -\infty\] for every interval $I$ of $\T$. Then \[ \Po^t(\mu) = \Po^t(\mu_\D) \oplus L^t(\mu_\T).\]
\end{mainthm}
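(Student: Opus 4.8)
The plan is to deduce \thref{KrieteMaccluerConjectureProof} from the general-exponent inclusion furnished by \thref{FwSplittingOffProp}, after observing that the hypothesis places us in the extreme case $F = E$ of the residual set. Indeed, if $\int_I \log w \, d\m = -\infty$ for every interval $I \subseteq \T$, then each point $x \in E$ is contained only in intervals on which $\log w$ fails to be integrable, so by \eqref{reswdef} we have $E \subseteq F \subseteq E$, that is $F = E$. Consequently $\mu_{\T \setminus F}$ is the null measure, $\mu_F = \mu_\T$, and the asserted decomposition is exactly the specialization $F = E$ of \eqref{MainTheoremDecompositionEquation}, now demanded for every finite exponent $t$ rather than only for $t = 2$.

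First I would invoke \thref{FwSplittingOffProp} to secure the inclusion $L^t(\mu_F) \subseteq \Po^t(\mu)$, valid for any finite $t$ under the standing hypothesis \eqref{ExpDecTag}. Since $F = E$, this reads $L^t(\mu_\T) \subseteq \Po^t(\mu)$: every $g \in L^t(\mu_\T)$, extended by zero across $\D$, is a limit in $L^t(\mu)$ of analytic polynomials. This single inclusion carries all of the analytic weight of the theorem; it is precisely the step in which the maximal local non-integrability of $\log w$ together with the decay hypothesis \eqref{ExpDecTag} are used to manufacture polynomials that peak on $\T$ while remaining negligible with respect to $\mu_\D$. I expect it to be the sole genuine obstacle, with everything else being soft.

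It then remains to run the standard splitting argument inside $L^t(\mu) = L^t(\mu_\D) \oplus L^t(\mu_\T)$, the decomposition induced by the disjoint carriers $\D$ and $\T$. The inclusion $\Po^t(\mu) \subseteq \Po^t(\mu_\D) \oplus L^t(\mu_\T)$ is automatic, since if $p_n \to f$ in $L^t(\mu)$ then the restrictions to $\D$ converge in $L^t(\mu_\D)$, forcing the $\D$-component of $f$ into $\Po^t(\mu_\D)$, while its $\T$-component lies in $L^t(\mu_\T)$ trivially. For the reverse inclusion the summand $L^t(\mu_\T)$ already sits in $\Po^t(\mu)$ by the previous step; and for any analytic polynomial $p$ the identity $p \cdot 1_\D = p - p \cdot 1_\T$ exhibits $p \cdot 1_\D$ as the difference of $p \in \Po^t(\mu)$ and $p \cdot 1_\T \in L^t(\mu_\T) \subseteq \Po^t(\mu)$, whence $p \cdot 1_\D \in \Po^t(\mu)$. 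Approximating an arbitrary $f \in \Po^t(\mu_\D)$ by polynomials and passing to the limit gives $f \cdot 1_\D \in \Po^t(\mu)$, so that $\Po^t(\mu_\D) \oplus L^t(\mu_\T) \subseteq \Po^t(\mu)$ and equality follows. For $t = 2$ the two summands are mutually orthogonal, recovering \eqref{MainTheoremDecompositionEquation} in the case $F = E$.

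Finally, I would remark on why dropping \eqref{LogLogIntTag} costs nothing here. That upper bound on the decay of $G$ enters \thref{maintheoremSuperExp} only to guarantee that the complementary piece retains bounded point evaluations and is irreducible, whereas \thref{KrieteMaccluerConjectureProof} makes no such assertion. Faster decay of $G$ merely thins $\mu_\D$ further and thereby facilitates the approximation underlying \thref{FwSplittingOffProp}, so \eqref{ExpDecTag} alone suffices for the inclusion and hence for the stated splitting.
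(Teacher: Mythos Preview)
Your proposal is correct and follows essentially the same route as the paper: observe that the hypothesis forces $F=E$, invoke \thref{FwSplittingOffProp} to obtain $L^t(\mu_\T)\subset\Po^t(\mu)$, and conclude. The paper's proof simply says ``the result follows directly from \thref{FwSplittingOffProp}'' at that point, whereas you additionally spell out the elementary splitting step $\Po^t(\mu)=\Po^t(\mu_\D)\oplus L^t(\mu_\T)$; this is a harmless (and arguably helpful) elaboration rather than a different approach.
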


The analogy with Szegö's theorem is clear. If we add a piece $\mu_\D$ to our measure $\mu_\T = w d\m$, then we need a stronger condition than \eqref{logwDivergentIntegral} to conclude that $L^t(\mu_\T) \subset \Po^t(\mu)$.

Note that our method allows for the conclusion even in the non-Hilbertian setting $t \neq 2$ (see \thref{FwSplittingOffProp}). Kriete and MacCluer proved \thref{KrieteMaccluerConjectureProof} in the special case when $w$ is a characteristic function and $t=2$ (see \cite[Corollary 7.1]{kriete1990mean}). 

Kriete and MacCluer studied also the setting in which the limit in \eqref{ExpDecTag} is zero. The methods developed in the present paper can be applied to this situation also, and one can reach some sharp versions of results from \cite{kriete1990mean}. They propose in \cite[Section 9]{kriete1990mean} a corresponding Conjecture 1 on conditions for splitting in this setting, expecting behavior similar to the one proved here in \thref{KrieteMaccluerConjectureProof} in the case when \eqref{ExpDecTag} holds, but where intervals are replaced by more complicated sets (more precisely Beurling-Carleson sets, defined below). It is not immediately clear if the present method can give a conclusive answer to that problem. This question, and more broadly the setting in which \eqref{ExpDecTag} is violated, is planned to be studied in a future work. The resolution of the conjecture would have implications for the theory of approximations in de Branges-Rovnyak spaces (see \cite{ConstrFamiliesSmoothCauchyTransforms}, \cite{DBRpapperAdem}).

\subsection{Theorems of Khrushchev and Volberg, sharpness of the main result} Both \thref{maintheoremSuperExp} and \thref{KrieteMaccluerConjectureProof} are close to optimal, in the sense that integrability of the logarithm of $w$ on intervals does not determine the structure of $\Po^t(\mu)$ for natural choices of weights $G$ which just barely fail to satisfy \eqref{ExpDecTag} and \eqref{LogLogIntTag}.

To be more precise, just below the cutoff \eqref{ExpDecTag} one can find the functions \begin{equation} \label{weight1khrushchev}
    G(x) = \exp(-c/x^\alpha)
\end{equation} for $\alpha \in (0,1)$ and $c > 0$. For such $G$ the structure of $\Po^2(\mu)$ can not be determined from the integrability properties of $\log w$ restricted to intervals. This is a consequence of fundamental results of Khrushchev from \cite{khrushchev1978problem}. His theory can be applied to certain measures of the form \eqref{mustructre} in which the weight $w = 1_E$ is a characteristic function of some set $E$. In fact, let $E$ a closed subset of $\T$ which has positive measure and satisfies the \textit{$\alpha$-Beurling-Carleson condition} \begin{equation} \label{alphaCarlesonSet} \sum_{\ell \in L} |\ell|^\alpha < \infty, \end{equation} where $L$ is the family of open intervals $\ell$ which form the complement of $E$ in $\T$, and $|\ell|$ is the length of $\ell$. It is possible to produce such a set which additionally contains no intervals, so that trivially the integral of $\log w = \log 1_E$ diverges over any interval. But, in contrast to \thref{maintheoremSuperExp}, Khrushchev's results can be applied to conclude that for $G$ given by \eqref{weight1khrushchev} and $d\mu = G(1-|z|)dA(z) + 1_E d\m$, the space $\Po^2(\mu)$ is irreducible. His results apply to general weights $G$ for which the limit in \eqref{ExpDecTag} is zero instead of positive, but some regularity assumptions are always needed. In particular, Khrushchev's method requires the integrability of $\log 1/G$. 

The paper \cite{khrushchev1978problem} also presents implicitly a typical application in complex function theory of a structure theorem for $\Po^2(\mu)$. For instance, assume that $G(z) = (1-|z|)^\beta$ for some large $\beta > 0$, $w = 1_E$ is the characteristic function of some set $E$, and that we are in the situation that \begin{equation}
    \label{p2muNonSplitting} \Po^2(\mu) \neq \Po^2(\mu_\D) \oplus L^2(\mu_\T),
\end{equation} i.e., splitting does not occur. Then there must exist a function \[f = f_\D + f_\T \in \Po^2(\mu_\D) \oplus L^2(\mu_\T)\] which is orthogonal to the monomials $\{z^n\}_{n=0}^\infty = \{z^n_\D + z^n_\T\}_{n=0}^\infty$ in $\Po^2(\mu_\D) \oplus L^2(\mu_\T)$. This orthogonality means that the positive Fourier coefficients $\widehat{(f_\T)}_n$ of the function $f_\T$, which lives only on $E$, are equal to $-\int_\D \conj{z}^n f_\D (z)(1-|z|)^\beta dA(z)$. Therefore, by the Cauchy-Schwarz inequality, these coefficients admit a bound \begin{equation} \label{uniSpecDecayFeq}
 |\widehat{(f_\T)}_n| \leq C \sqrt{\int_\D |z|^{2n} (1-|z|)^\beta dA(z)} \simeq \frac{1}{(1+n)^{\gamma}}, \quad n \geq 0
\end{equation} where $\gamma = (1+\beta)/2$. If $E$ would happen to be nowhere dense and contain no intervals, then the fact that $f_\T$ lives only on $E$ implies that this function is very irregular, in the sense of being far from smooth. So $f_\T$ satisfies two conflicting properties: it is irregular, and yet obeys a strong spectral decay condition indicated by \eqref{uniSpecDecayFeq}. Of course, the point is that the spectral decay is only one-sided. Khrushchev showed in \cite{khrushchev1978problem} that closed sets $E$ which support such a function are precisely the \textit{Beurling-Carleson sets}, and he did it essentially by solving a splitting problem for a class of $\Po^t(\mu)$-spaces. Beurling-Carleson sets are those closed sets which satisfy a weaker version of \eqref{alphaCarlesonSet}, namely \[ \sum_{\ell \in L} |\ell| \log(1/|\ell|) < \infty. \] For more information regarding Beurling-Carleson sets, and their applications in analysis, one can consult the recent article \cite{ivrii2022beurling}. The already mentioned Conjecture 1 of Kriete and MacCluer from \cite{kriete1990mean} is related to a definitive generalization of Khrushchev's results discussed in this paragraph, to the weighted context $w \neq 1_E$.

Sharpness at the other end, at the condition \eqref{LogLogIntTag}, follows from the work of Volberg. Essentially, if the condition \eqref{LogLogIntTag} is not satisfied by $G$, then local considerations of integrability of $\log w$ play no role at all, and the global divergence or convergence of the integral of $\log w$ on the whole of $\T$ is the only interesting parameter. Namely, $\Po^2(\mu)$ of the form \eqref{mustructre} with $G$ not satisfying \eqref{LogLogIntTag}, splits if and only if \eqref{logwDivergentIntegral} is satisfied, similarly to how Szegö's classical theorem works. As usual, a regularity condition on $G$ is necessary. The claim follows from Volberg's theorem on quasianalytic functions, presented in English for instance in \cite{vol1987summability}, and also in \cite{havinbook}. Volberg's theorem asserts that a function $h \in L^2$ which satisfies a very strong unilateral spectral decay estimate \[ |\widehat{h}_n| \leq C e^{-M(n)}, \quad n = 1,2,3 \ldots\] for some positive sequence $\{M(n)\}_{n \geq 1}$ satisfying \[ \sum_{n\geq 1} \frac{M(n)}{n^2} = \infty,\] should have a summable logarithm: $\int_\T \log |h| \, d\m > -\infty$. To derive the splitting statement from Volberg's theorem, one mimics the orthogonality and spectral decay argument in the previous paragraph where the application of Khrushchev's theorem is presented. See \cite{kriete1990mean} and references therein for further details of the proof.  

\subsection{Structure of the rest of the paper} In the preliminary Section \ref{backgroundsection} we set some conventions, and review a few background results from operator theory which will be used in the proofs. Section \ref{nonanalyticsection} forms the core of the paper and is concerned with the identification of the largest $L^t$-summand appearing in $\Po^t(\mu)$. This part contains the main technical constructions of the paper, and it ends with a proof of \thref{KrieteMaccluerConjectureProof}. In Section \ref{AnalyticSummandSection} we mainly re-use, extend and specialize some ideas and techniques already appearing in the literature, mainly coming from the Kriete and MacCluer paper \cite{kriete1990mean}, in order to prove the irreducibility of the first summand in \eqref{MainTheoremDecompositionEquation}. This completes the proof of \thref{maintheoremSuperExp}.

\subsection{Acknowledgement} The author would like to sincerely thank Adem Limani for many insightful discussions and ideas concerning the content of this work.

\section{Preliminaries and conventions}

\label{backgroundsection}

For clarity of exposition, we include in this preliminary section a brief discussion of two rather peripheral issues, and set two conventions. We also recall the structure of subspaces of $L^t$-spaces on the circle which are invariant for multiplication by analytic polynomials.

\subsection{$H^\infty$ as a subset of $\Po^t(\mu)$} The space $H^\infty$ consists of functions which are analytic in the unit disk $\D$ and which are uniformly bounded there. Any function $h \in H^\infty$ is well known to admit an extension to the circle $\T$. This extension is defined, only up to a subset of $\m$-measure zero, by the radial limits \begin{equation} \label{radiallimith}
   h(z) = \lim_{r \to 1^+} h(rz), \quad z \in \T. 
\end{equation} 
Using this extension, we can consider $h = h_\D + h_\T$ as a Borel measurable function on $\cD$ and as an element of the spaces $L^t(\mu) = L^t(\mu_\D) + L^t(\mu_\T)$ whenever the measure $\mu$ has the structure \eqref{mustructre}. To be more precise, we may choose a Borel set of full measure on $\T$ on which the radial limit in \eqref{radiallimith} exists, and we can set $h_\T$ to zero elsewhere.

It is easy to see that this extension of $h$ lies in $\Po^t(\mu)$. Indeed, the dilation $h_r(z) := h(rz)$, for $z \in \cD$ and $r \in (0,1)$, is holomorphic in a neighbourhood of $\cD$, and so can be approximated uniformly by analytic polynomials. Thus $h_r \in \Po^t(\mu)$ for any finite positive $t$. By \eqref{radiallimith} and the dominated convergence theorem it is clear that, as $r$ tends to $1$, the functions $h_r$ converge in the norm of $L^t(\mu)$ to the function $h = h_\D + h_\T$, which hence lies in $\Po^t(\mu)$. By this argument, we see that the closure of $H^{\infty}$ in $L^t(\mu)$ is the same as the closure of $\mathcal{P}$ in $L^t(\mu)$, both of these closures being equal to $\Po^t(\mu)$. In the proofs below we shall be working with the more flexible class $H^\infty$ instead of $\mathcal{P}$. 

If $\Po^t(\mu)$ is not itself irreducible, then there will exist many elements of the space which all have a common restriction to $\D$. \textit{Therefore, whenever we work with a function $h \in H^\infty$ considered as an element of a space $\Po^t(\mu)$, we always mean that the part $h_\T$ of $h$ which lives on $\T$ is defined by the radial boundary values of $h$, as above.} 

\subsection{Carrier sets of measurable functions} If $f$ is a member of $L^t(\mu_\T) = L^t(w \,d\m)$ then of course this function is only well-defined up to a set of $m$-measure zero, and thus so is the set \begin{equation} \label{fcarrierSec2} \{ x \in \T : |f(x)| > 0 \}. \end{equation} A set $E$ will be a \textit{a carrier set} of $f$ if it is a Borel set on which $|f(x)| > 0$ up to a set of $m$-measure zero, and such that $f(x) = 0$ on the complement of $E$, again up to a set of measure zero. The exact choice of a representative of the carrier set will never play a role, since all our measures on $\T$ will be absolutely continuous with respect to $\m$. Note specifically that the carrier set in \eqref{fcarrierSec2} differs from the usual \textit{support} of the measure $f d\m$.

\subsection{Multiplication-invariant subspaces of Lebesgue spaces on the circle} Consider the usual Lebesgue space $L^t(\m)$, say for $t > 1$. The structure of the lattice of invariant subspaces of the \textit{shift operator}: \[f(z) \mapsto z f(z), \quad f \in L^t(\m)\] is well-known. If $\mathcal{S} \subset L^t(\T)$ is closed and invariant for the shift (or equivalently, invariant under multiplication by the analytic polynomials in $\mathcal{P}$), then $\mathcal{S}$ equals either a space of the form \[ L^t(m_F) := \{ f \in L^t(\m) : f \equiv 0 \text{ on } \T \setminus F \}\] for some measurable subset $F \subset \T$, or it equals \[ qH^t := \{ qh : h \in H^t \} \] where $q$ is a unimodular function on $\T$ and $H^t$ is the usual Hardy space of functions in $L^t(m)$ which have a non-negative Fourier spectrum. A proof can be found in \cite[Lectures II and IV]{helsonbook}. We shall have a need for a result which is an easy consequence of this.

\begin{lem} \thlabel{beurlingwienerlemma} Let $t > 1$, $w \in L^1(m)$ be a non-negative function with carrier set $E$, and $\mu = w d\m$. Let $g \in L^t(\mu)$ be a non-zero function such that \begin{equation}
    \label{gtlogint} \int_\T \log( |g|^t w) \, d\m = -\infty.
\end{equation}  Then the smallest closed subspace $\mathcal{S} \subset L^t(\mu)$ which contains $g$ and is invariant under multiplication by analytic polynomials equals \[\mathcal{S} = \{ f \in L^t(\mu) : f \equiv 0 \text{ on } E \setminus F \} = L^t(\mu_F),\] where \[ F =  \{ x \in E : g(x) \neq 0\}\] is the carrier set of $g$. 
\end{lem}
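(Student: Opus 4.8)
The plan is to reduce the statement to the known classification of shift-invariant subspaces of $L^t(\m)$. The space $\mathcal{S}$ is a closed, multiplication-invariant subspace of $L^t(\mu) = L^t(w\,d\m)$, but this space of functions supported on $E$ is not literally a subspace of $L^t(\m)$ because of the weight $w$. So the first move is to transfer the problem from $L^t(w\,d\m)$ to $L^t(\m)$ by the natural isometry $f \mapsto f w^{1/t}$, which identifies $L^t(\mu)$ with $L^t(m_E)$ (functions in $L^t(\m)$ vanishing off $E$). This map intertwines multiplication by $z$ on both sides, so it carries $\mathcal{S}$ to a closed shift-invariant subspace $\mathcal{S}'$ of $L^t(\m)$ sitting inside $L^t(m_E)$.

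Now I would apply the classification recalled just before the lemma: $\mathcal{S}'$ is either of the form $L^t(m_{F'})$ for some measurable $F' \subseteq E$, or of the form $qH^t$ for a unimodular $q$. The crux is to \emph{rule out the second possibility} using the hypothesis \eqref{gtlogint}. Indeed, $\mathcal{S}'$ contains the image $gw^{1/t}$ of the generator $g$, and the integrability condition $\int_\T \log(|g|^t w)\,d\m = -\infty$ says exactly that $\int_\T \log|gw^{1/t}|\,d\m = -\infty$. A nonzero function in $H^t$ cannot have a divergent logarithmic integral, by the standard Hardy space fact that $\log|h| \in L^1(\m)$ for $0 \neq h \in H^t$; multiplying by a unimodular $q$ does not change $\log|\cdot|$, so no nonzero function in $qH^t$ can have divergent logarithmic integral either. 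Since $gw^{1/t}$ is a nonzero element of $\mathcal{S}'$ with $\int_\T \log|gw^{1/t}|\,d\m = -\infty$, the subspace $\mathcal{S}'$ cannot equal $qH^t$. Hence $\mathcal{S}' = L^t(m_{F'})$ for some $F' \subseteq E$.

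It remains to identify $F'$ with the carrier set $F = \{x \in E : g(x) \neq 0\}$ of $g$. Since $\mathcal{S}' = L^t(m_{F'})$ is the \emph{smallest} shift-invariant subspace containing $gw^{1/t}$, and every function in $L^t(m_{F'})$ vanishes off $F'$, the generator $gw^{1/t}$ must vanish off $F'$; as $w > 0$ on $E$ up to a null set, this forces $F \subseteq F'$ up to $m$-measure zero. For the reverse inclusion, note that $L^t(m_F)$ is itself a closed shift-invariant subspace containing $gw^{1/t}$: it is manifestly shift-invariant and closed, and $gw^{1/t}$ vanishes off $F$ by definition of the carrier set. By minimality of $\mathcal{S}'$, we get $\mathcal{S}' \subseteq L^t(m_F)$, i.e.\ $F' \subseteq F$ up to null sets. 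Combining the two inclusions gives $F' = F$ almost everywhere, and transporting back through the isometry yields $\mathcal{S} = L^t(\mu_F)$, as claimed.

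The main obstacle I anticipate is not the lattice classification itself, which is quoted, but making the reduction to $L^t(\m)$ clean: one must check that the weighted isometry $f \mapsto fw^{1/t}$ is a well-defined surjective isometry from $L^t(w\,d\m)$ onto $L^t(m_E)$ and that it genuinely intertwines the shifts, so that shift-invariant subspaces correspond to shift-invariant subspaces. Care is needed because the isometry is onto $L^t(m_E)$ rather than all of $L^t(\m)$, so when invoking the classification one should first regard $\mathcal{S}'$ as a shift-invariant subspace of $L^t(\m)$ and then observe that its containment in $L^t(m_E)$ is preserved. The only other subtlety is the standard but essential Hardy-space input that a nonzero $H^t$ function has summable log-modulus; once that is in hand, the hypothesis \eqref{gtlogint} does all the work of excluding the analytic case.
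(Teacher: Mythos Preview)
Your proof is correct and follows essentially the same route as the paper: transport $\mathcal{S}$ to $L^t(\m_E)$ via the isometry $f\mapsto fw^{1/t}$, invoke the Beurling--Wiener classification of shift-invariant subspaces, and use the hypothesis $\int_\T \log(|g|^tw)\,d\m=-\infty$ together with the Hardy-space fact $\log|h|\in L^1(\m)$ for $0\neq h\in H^t$ to exclude the $qH^t$ alternative. The only difference is that you spell out the identification $F'=F$ in detail, whereas the paper dismisses this step as ``not hard to see.''
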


\begin{proof}
The map $U: L^t(\mu) \to L^t(\m_E)$ which maps $f \in L^t(\mu)$ to the function $Uf = fw^{1/t}$ is a surjective isometry between the spaces. The mapping sends $\mathcal{S}$ to a closed subspace $U\mathcal{S}$ of $L^t(\m_E) \subset L^t(\m)$ which is invariant under multiplication by analytic polynomials. By the structure theorem for such invariant subspaces discussed above, $U\mathcal{S}$ equals either a space of the form $L^t(m_F)$, for some measurable subset $F \subset \T$ (in fact, $F \subset E$), or it equals $qH^t$. If we are in the second case, then since $g \in \mathcal{S}$, we have that $qh = gw^{1/t}$ for some non-zero $h \in H^t$. It is well known that non-zero $h \in H^t$ implies that $|h| = |qh|$ has an integrable logarithm, so we immediately arrive at a contradiction to \eqref{gtlogint}. It follows that $U\mathcal{S} = L^t(\m_F)$ for some measurable set $F$. It is not hard to see that $F$ must be of the form presented above.
\end{proof}

We remark also a corollary to \thref{beurlingwienerlemma} which will be useful to keep in mind.

\begin{cor} \thlabel{cor22} If $\mu$ is as in \eqref{mustructre} and $\Po^t(\mu)$ contains the characteristic function of a set $F \subset \T$, then it also contains the characteristic function of any measurable subset of $F$.
\end{cor}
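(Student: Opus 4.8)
The plan is to deduce Corollary \ref{cor22} as a direct application of Lemma \ref{beurlingwienerlemma}. Suppose $\Po^t(\mu)$ contains the characteristic function $1_F$ of a set $F \subseteq \T$, and let $F_0 \subseteq F$ be an arbitrary measurable subset. I want to show $1_{F_0} \in \Po^t(\mu)$ as well. The key observation is that $\Po^t(\mu)$ is itself invariant under multiplication by analytic polynomials, since the product of a polynomial with a limit of polynomials is again such a limit. Therefore the smallest closed multiplication-invariant subspace $\mathcal{S}$ generated by $g := 1_F$ is contained in $\Po^t(\mu)$, and it suffices to verify that $1_{F_0} \in \mathcal{S}$.

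To identify $\mathcal{S}$ via \thref{beurlingwienerlemma}, I would first reduce to the part of the measure living on $\T$. The generator $g = 1_F$ is supported on $\T$, and its restriction to $\D$ vanishes; one checks that the invariant subspace it generates inside $L^t(\mu) = L^t(\mu_\D) \oplus L^t(\mu_\T)$ has trivial $\D$-component, so effectively we work inside $L^t(\mu_\T) = L^t(w\,d\m)$ with $w$ the weight from \eqref{MuTform}. The hypothesis of the lemma is the logarithmic divergence \eqref{gtlogint}, namely $\int_\T \log(|g|^t w)\,d\m = -\infty$. With $g = 1_F$ this integrand equals $\log(1_F \, w) = \log w$ on $F$ and is $-\infty$ (interpreted as the function being zero, so the logarithm being $-\infty$) off $F$. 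Since $1_F$ vanishes on a set of positive measure whenever $F \neq \T$ up to null sets, the integral indeed diverges to $-\infty$, and the hypothesis is satisfied. (If $F$ has full measure the statement is trivial, as every subset of $F$ is then handled by the same argument applied to $L^t(\mu_\T)$ directly.)

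Applying \thref{beurlingwienerlemma} then gives $\mathcal{S} = L^t(\mu_{F'})$, where $F'$ is the carrier set of $g = 1_F$, which is precisely $F$ up to $m$-measure zero. In particular $\mathcal{S} = L^t(\mu_F)$ consists of \emph{all} $L^t(\mu)$-functions vanishing off $F$. Since $1_{F_0}$ vanishes outside $F_0 \subseteq F$ and lies in $L^t(\mu)$, it belongs to $L^t(\mu_F) = \mathcal{S} \subseteq \Po^t(\mu)$, which is the desired conclusion.

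I do not anticipate a serious obstacle here, as the corollary is essentially a repackaging of the lemma. The one point requiring a little care is the verification of the divergence condition \eqref{gtlogint} for the generator $1_F$, and the bookkeeping that the invariant subspace generated by a $\T$-supported function genuinely reduces to the circle computation rather than acquiring a spurious analytic component from $L^t(\mu_\D)$. The latter follows because multiplication by polynomials and norm-limits preserve the property of vanishing $\mu_\D$-a.e.\ once one notes that $1_F$ is supported on $\T$; alternatively, one invokes the isometry $U$ of the lemma's proof directly on $L^t(\mu_\T)$.
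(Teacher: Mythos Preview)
Your proposal is correct and follows exactly the same route as the paper: apply \thref{beurlingwienerlemma} to the generator $g = 1_F$ to conclude that the closed multiplication-invariant subspace it generates is $L^t(\mu_F) \subset \Po^t(\mu)$, whence $1_{F_0} \in \Po^t(\mu)$ for any $F_0 \subset F$. The paper's proof is the one-line version of yours; your extra care in verifying \eqref{gtlogint} and in noting that the invariant subspace stays inside $L^t(\mu_\T)$ is justified but not strictly needed beyond what the lemma already encodes.
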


\begin{proof}
If $\Po^t(\mu)$ contains $1_F$, then by \thref{beurlingwienerlemma} it contains $L^t(\mu_F)$, so in particular it contains the characteristic functions of any subset of $F$. 
\end{proof}

\section{Identifying the non-analytic summand}

\label{nonanalyticsection}

The goal of this section is to show that $L^t(\mu_F) \subset \Po^t(\mu)$, where $F$ is the residual set in \eqref{reswdef}. This will already imply \thref{KrieteMaccluerConjectureProof}.

\subsection{Reduction to a problem of real variable analysis}

We start by showing that our problem can be solved if we can construct a sequence of real-valued functions with certain properties. This reduction is detailed in \thref{reductionLemma} below. The proof is somewhat lengthy but it is fairly straight-forward, and uses only basic functional analysis, measure theory and the structure of shift invariant subspaces of $L^t(\m)$ discussed earlier in Section \ref{backgroundsection}.

For a measurable and real-valued function $f$ on $\T$ the Poisson integral $P_f$, appearing in $(iv)$ below, is defined as usual by

\begin{align*} \label{PoissonIntegralFormula}
P_f(z) & := \int_\T \frac{1-|z|^2}{|x-z|^2} f(x) \, d\m (x) \\
        & = \Re \int_\T \frac{x+z}{x-z} f(x) \, d\m (x), \quad z \in \D.
\end{align*}

\begin{lem} \thlabel{reductionLemma}
Let $G$ be a function which satisfies the condition \eqref{ExpDecTag}, $w \in L^1(m)$ be a non-negative measurable function, and $F$ be a subset of $E = \{ x \in \T : w(x) > 0 \}$ which is of positive Lebesgue measure. Assume that for each sufficiently large positive number $N$ there exists a bounded real-valued function $f_N$ defined on $\T$ and a corresponding set $A_N \subset \T$ such that the following five conditions hold:
\begin{enumerate}[(i)]
        \item $\int_\T f_N \, d\m = 0,$
        \item $f_N(x) \leq -N$ on $F \cap A_N$,
        \item $|F \setminus A_N|$ tends to zero as $N$ tends to $+\infty$,
        \item the Poisson integral $P_{f_N}$ satisfies the bound $P_{f_N}(z) \leq \frac{C}{1-|z|}$ for some $C > 0$ independent of $N$,
        \item $\int_\T \exp(f_N) w \,d\m \leq C$, for some $C > 0$ independent of $N$.
        
\end{enumerate}
In that case, we have that $L^t(\mu_F) \subset \Po^t(\mu)$, with $\mu$ as in \eqref{mustructre}, for any finite $t > 0$.
\end{lem}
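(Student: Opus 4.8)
The plan is to reduce the inclusion $L^t(\mu_F)\subset\Po^t(\mu)$ to producing a single function in $\Po^t(\mu)$ that witnesses the decoupling of $F$, and then to invoke the shift-invariant subspace machinery. Concretely, if I can show that $1_F\in\Po^t(\mu)$, then \thref{beurlingwienerlemma} together with \thref{cor22} upgrades this at once to the full inclusion, since $F$ is carried by the residual set and hence $\int_\T\log(1_F^{\,t}w)\,d\m=-\infty$ (the logarithm of $w$ is non-integrable over $F$). So everything comes down to manufacturing, from the data $f_N$, analytic functions that concentrate away from $F$.

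The main construction is the outer function
\[
\Phi_N(z):=\exp\Big(\int_\T\frac{x+z}{x-z}f_N(x)\,d\m(x)\Big),\qquad z\in\D,
\]
which is zero-free and analytic in $\D$, has boundary modulus $|\Phi_N|=\exp(f_N)$ a.e.\ on $\T$, and satisfies $|\Phi_N(z)|=\exp(P_{f_N}(z))$ inside $\D$. The normalization $(i)$ gives $\Phi_N(0)=\exp\big(\int_\T f_N\,d\m\big)=1$. I would then fix a small power $s>0$ and set $\Psi_N:=\Phi_N^{\,s}$, which is again outer with $\Psi_N(0)=1$. The point of the power is to place $\Psi_N$ in $\Po^t(\mu)$ with a bound independent of $N$: on the circle $\int_\T|\Psi_N|^tw\,d\m=\int_\T\exp(st\,f_N)w\,d\m$ is controlled by $(v)$ (splitting into $\{f_N\geq 0\}$ and $\{f_N<0\}$ and using $st\leq 1$ together with $w\in L^1(\m)$), while on the disk $(iv)$ and \eqref{ExpDecTag} combine to give
\[
\int_\D|\Psi_N|^tG(1-|z|)\,dA(z)\leq\int_\D\exp\Big(\frac{stC}{1-|z|}\Big)G(1-|z|)\,dA(z),
\]
which is finite and $N$-independent once $s$ is chosen so small that $stC$ lies strictly below the positive constant furnished by \eqref{ExpDecTag}. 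Membership of $\Psi_N$ in the closure $\Po^t(\mu)$, and not merely in $L^t(\mu)$, then follows from the dilation argument of Section \ref{backgroundsection}, approximating $\Psi_N$ by $\Psi_N(r\,\cdot)$ as $r\to1^-$.

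Next I would record the two decisive features of $\{\Psi_N\}$. First, it concentrates away from $F$: on $F\cap A_N$ one has $|\Psi_N|^t=\exp(st\,f_N)\leq\exp(-stN)$ by $(ii)$, so with $(iii)$ and a uniform-integrability estimate (secured from a bound on a slightly larger power $\Phi_N^{\,s'}$, $s<s'$) one gets $\int_F|\Psi_N|^t\,d\mu_F\to 0$, i.e. $\Psi_N\to 0$ in $L^t(\mu_F)$. Second, $\Psi_N(0)=1$ while $|\Psi_N(z)|\leq\exp\big(sC/(1-|z|)\big)$, so $\{\Psi_N\}$ is locally uniformly bounded on $\D$; by Montel's theorem and, for $t>1$, weak compactness in $L^t(\mu)$, a subsequence converges to a limit $\Psi\in\Po^t(\mu)$ which is analytic on $\D$ with $\Psi(0)=1$ and which vanishes $\m$-a.e.\ on $F$. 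Thus $\Po^t(\mu)$ contains a nonzero element carried off $F$, and the companion functions $1-\Psi_N$ remain in $\Po^t(\mu)$ while tending to $1$ on $F$. From here I would extract $1_F\in\Po^t(\mu)$ by a duality argument when $t>1$, showing that every element of the annihilator of $\Po^t(\mu)$ in $L^{t'}(\mu)$ is carried off $F$ by testing it against the functions $z^k\Psi_N$ and passing to the limit, and then reducing the remaining exponents $0<t\leq 1$ to this case through the uniform-bound mechanism rather than Hilbert-space orthogonality.

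I expect the main obstacle to be the disk estimate, that is, placing $\Psi_N$ into $L^t(\mu)$ uniformly in $N$: this is exactly the step where the exponential decay \eqref{ExpDecTag} of $G$ must overpower the Poisson growth permitted by $(iv)$, and it is the reason a fractional power $s$, matched to the \eqref{ExpDecTag}-constant, is unavoidable. A secondary difficulty is the endgame, converting the family of functions concentrating away from $F$ into the genuine inclusion $1_F\in\Po^t(\mu)$: a single weak limit only yields a function vanishing on $F$, so this step should draw on the full module structure of $\Po^t(\mu)$ over the analytic polynomials together with \thref{beurlingwienerlemma}, with additional care to treat all finite $t>0$.
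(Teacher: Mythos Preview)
Your overall strategy---form the outer functions $\Phi_N$, pass to a weak limit, and then invoke \thref{beurlingwienerlemma}---is exactly the paper's. The gap is in the choice of the power. You fix a single small $s>0$ so that $\int_\D|\Psi_N|^tG(1-|z|)\,dA<\infty$ uniformly in $N$; this gives only $|\Psi_N(z)|\le\exp\big(sC/(1-|z|)\big)$, and after Montel and weak compactness you learn merely that the limit $\Psi$ is analytic on $\D$ with $\Psi(0)=1$ and $\Psi|_F=0$. You do \emph{not} know that $\Psi\equiv 1$ on $\D$, so $1-\Psi$ is not supported on $\T$ and \thref{beurlingwienerlemma} does not apply to it. Your proposed ``duality argument'' (testing annihilators against $z^k\Psi_N$) does not circumvent this: the identity $\int z^k\Psi_N\,\overline{h}\,d\mu=0$ just passes to $\int z^k\Psi\,\overline{h}\,d\mu=0$, which is already known since $z^k\Psi\in\Po^t(\mu)$, and it carries no information about $h|_F$ because you control neither the limit of $\Psi_N$ on $\D$ nor on $\T\setminus F$.

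The paper's remedy is a one-line modification of your construction: let the power depend on $N$. Replace $f_N$ by $M^{-1}f_N$ with $M=M(N)\to\infty$ chosen so that $N/M\to\infty$ (equivalently, take $s=s(N)\to 0$ with $sN\to\infty$). Conditions $(i)$--$(iii)$ survive, and $(iv)$ improves to $P_{f_N}(z)\le C_N/(1-|z|)$ with $C_N\to 0$. Then $|g_N(z)|\le G(1-|z|)^{-C_N/d}$, so $\limsup_N|g_N(z)|\le 1$ at every $z\in\D$; combined with $g_N(0)=1$, the maximum principle forces $g_N\to 1$ locally uniformly on $\D$. Now the weak limit $g\in\Po^t(\mu)$ is identically $1$ on $\D$ and vanishes on $F$, so $\tilde g:=g-1\in\Po^t(\mu)$ lives only on $\T$ and is the constant $-1$ on $F$. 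A direct application of \thref{beurlingwienerlemma} (the needed divergence $\int_\T\log(|\tilde g|^tw)\,d\m=-\infty$ follows from the standing hypothesis $\int_\T\log w\,d\m=-\infty$, not from any residual-set property of $F$, which is not assumed in the lemma) gives $L^t(\mu_{\tilde F})\subset\Po^t(\mu)$ with $\tilde F\supset F$, finishing the case $t>1$; the reduction to $t\le 1$ is then as you indicate.
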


\begin{proof}
Let $M = M(N)$ be a positive number lesser than $N$, which tends to $+\infty$ as $N$ tends to $+\infty$, and such that the ratio $N/M$ tends to $+\infty$ as $M$ and $N$ tend to $+\infty$. By replacing $f_N$ with $M^{-1}f_N$ we do not affect the assumptions $(i)$ or $(iii)$ in the statement of the lemma. The assumption $(ii)$ is only slightly changed: we have that $M^{-1}f_N \leq -N/M$ on $F \cap A_N$, which of course tends to zero uniformly on $F \cap A_N$, as $N$ tends to $+\infty$. However, the assumption $(iv)$ is improved to 
\begin{equation}
    \label{improvedPsnGrowthEstimate}
    P_{f_N}(z) \leq \frac{C_N}{1-|z|}
\end{equation} for a constant $C_N$ $(= C/M)$ which tends to zero as $N \to +\infty$, and $(v)$ is improved to $\exp(f_N) \in L^t(\mu_\T)$ for any $t > 0$, as long as $N$ (and consequently $M$) is large enough, with the corresponding $L^t(\mu_\T)$-norm being bounded uniformly in $N$ for any fixed $t$. So a simple re-labeling of the subscripts in the family $\{f_N\}_N$ gives us a new family for which $(i)$, $(ii)$ and $(iii)$ hold, and also the mentioned improved versions of $(iv)$ and $(v)$ are satisfied.

With this modification of the functions $f_N$, we construct the outer functions \begin{equation}
    \label{OuterFunctionsgN}
        g_N(z) = \exp\Big( \int_\T \frac{x + z }{x - z} f_N(x) d\m(x)\Big), \quad z \in \D
 \end{equation} each of which is a bounded analytic functions in the unit disk. These functions are contained in $\Po^t(\mu)$, in the sense explained in Section \ref{backgroundsection}. Moreover, by the second assumption in the lemma statement and by the well-known properties of boundary behaviour of outer functions, we also have \begin{equation} \label{gNboundaryValues}
     |g_N(x)| = \exp(f_N(x)) \leq \exp(-N)
 \end{equation}  for almost every $x \in F \cap A_N$. 
 
 Recalling the assumpions on $G$ in \eqref{ExpDecTag}, let \[ \inf_{x \in (0,1] } x \log(1/G(x)) = d > 0.\] Using this, and the inequality in \eqref{improvedPsnGrowthEstimate}, we obtain for $z \in \D$ a uniform growth estimate:
 \begin{align}     
     |g_N(z)| & = \exp\Big( P_{f_N}(z)\Big) \label{gNgrowth} \\ & \leq \exp\Big(\frac{C_N}{1-|z|}\Big) \nonumber \\ 
     & \leq \exp \Big( \frac{C_N}{d} \log(1/G(1-|z|)) \Big) \nonumber \\ 
     & = \Big(G(1-|z|)\Big)^{-C_N/d}. \nonumber 
 \end{align}
Since $C_N$ is eventually small enough to ensure the inequality \[\frac{tC_N}{d} \leq 1,\] we have from \eqref{gNgrowth} the norm bound \begin{equation} \label{gnNormBound}
     \sup_{N} \int_\D |g_N(z)|^t G(1-|z|) dA(z) < \infty.
 \end{equation} 
Let us suppose for the moment that $t > 1$. The remarks made at the beginning of the proof imply that, without loss of generality, we can assume that the functions $\exp(f_N)$ are uniformly bounded in the norm of the reflexive space $L^t(\mu_\T)$. In particular, by fixing a definitive sequence of numbers $N$ tending to $+\infty$ and passing to a subsequence, we can assume that the functions $\exp(f_N)$ converge weakly in the space $L^{t}(\mu_\T)$ to some function $g \in L^{t}(\mu_\T)$. Since this works for arbitrary $t$, we can even pass to further subsequence and obtain that $g \in L^{t_*}(\mu_\T)$ for some $t_* > t$. It is not hard to see that $g$ vanishes almost everywhere on $F$. Indeed, if $s$ is the Hölder conjugate exponent to $t$ and $r$ is any function in $L^s(\mu_\T)$ living only on the set $F$, then
\begin{align} \label{gvanishesOnFEq}
    \int_\T gr \, d\mu_\T  & = \lim_{N \to +\infty}  \int_\T g_N r \, d\mu_\T \nonumber \\ & = \lim_{N \to +\infty} \int_{F \cap A_N} g_Nr \, d\mu_\T  + \int_{F \setminus A_N} g_Nr \, d\mu_\T.
\end{align} For the first term in the limit above, we can use the Cauchy-Schwarz inequality and the boundary value equality in \eqref{gNboundaryValues} to obtain the bound 
\[ \Big\vert \int_{F \cap A_N} g_Nr \, d\mu_\T \Big\vert \leq \exp(-N) \cdot \|r\|_{L^s(\mu_\T)},\] which tends to zero as $N$ grows to infinity, while for the second term similarly we obtain \[ \Big\vert \int_{F \setminus A_N} g_Nr \, d\mu_\T \Big\vert \leq \|\exp(f_N)\|_{L^t(\mu_\T)} \cdot \Big(\int_{F \setminus A_N} |r|^s \, d\mu_\T \Big)^{1/s},\] which also tends to zero, by the absolute continuity of the finite measure $|r|^s d\mu_\T$, assumption $(iii)$ in the statement of the lemma, and the improved version of $(v)$ discussed above. Thus \eqref{gvanishesOnFEq} vanishes for any choice of $r$, and consequently $g \equiv 0$ on $F$.

The outer functions $g_N$ satisfy $g_N(0) = 1$ by the assumption $(i)$, and the estimate in \eqref{gNgrowth} implies that \[\limsup_{N \to +\infty} |g_N(z)| \leq 1.\] The estimate in \eqref{gNgrowth} also implies that the family $\{g_N\}_N$ is bounded uniformly on compact subsets of $\D$. It follows now from Montel's theorem and the maximum modulus theorem for analytic functions that we have \[\lim_{N \to +\infty} g_N(z) = 1,\] uniformly on compact subsets of the unit disk $\D$. By the norm bound in \eqref{gnNormBound} and again passing to a subsequence of the $N$, we can also assume that the analytic functions $g_N$ converge weakly in the space $\Po^t(\mu_\D)$ to the constant 1.

All in all, we see that we can ensure that some sequence of the functions $g_N \in \Po^t(\mu)$ converges weakly as $N \to +\infty$ to a function $g \in \Po^t(\mu)$ which is identically equal to $1$ in $\D$ and vanishes almost everywhere on the set $F$ on $\T$. Then the function $\tilde{g} := g-1 \in \Po^t(\mu)$ vanishes on $\D$ and it is equal to the constant $1$ almost everywhere on the set $F$. 

Recall that we have ensured that $\tilde{g} \in L^{t_*}(\mu_\T)$ for some $t_* > t$. Using this, we re-write \begin{align*} 
 \log (|\tilde{g}|^t w) & = (t/t_*)\log(|\tilde{g}|^{t_*} w^{t^*/t}) \\
& = (t/t_*)\log(|\tilde{g}|^{t_*}w) + (t/t_*)\log(w^{t_*/t - 1}) \\
& = (t/t_*)\log(|\tilde{g}|^{t_*}w) + (t/t_*)(t_*/t - 1)\log(w).
\end{align*} In the last line we see a linear combination, with positive coefficients, of the term $\log w$, which satisfies \[ \int_\T \log w \, d\m = - \infty,\] and the term $\log(|\tilde{g}|^{t_*}w)$ which is the logarithm of an $L^1(m)$ function. This term might or might not be absolutely integrable, but certainly satisfies \[ \int_\T \log(|\tilde{g}|^{t_*}w) \, d\m < +\infty\]  as a consequence of the integrability of $|\tilde{g}|^{t_*}w$. It follows that 
\begin{equation}
\label{tildewNotLogIntegrable} \int_\T \log( |\tilde{g}|^t w) \, d\m = -\infty. \end{equation} 
Since $\tilde{g}$ vanishes on $\D$, the smallest subpace $\mathcal{S}$ of $\Po^t(\mu)$ which is invariant for multiplication by analytic polynomials and which contains $\tilde{g}$ is actually a subspace of $L^t(\mu_\T)$. Then \thref{beurlingwienerlemma} applies to identify $\mathcal{S}$ as $L^t(\mu_{\tilde{F}})$, where $\tilde{F}$ is a carrier for $\tilde{g}$. Since $\tilde{g} \equiv 1$ on $F$, the proof is complete in the case $t > 1$.

An elementary argument extends the conclusion from finite $t > 1$ to $t > 0$. In the case $t \in (0,1]$, let $h \in L^\infty(m)$ live only on the set $F$. Then $h \in L^2(\mu_\T)$, so by what has already been proved there exists a sequence of analytic polynomials $\{p_n\}_n$ which converges to $h$ in the norm of $\Po^2(\mu)$ and also pointwise $\mu$-almost everywhere. Letting \[A_n := \{ x \in \cD : |p_n(x) - h(x)| > 1\}\] we note that the sequence $|p_n - h|^t 1_{\cD \setminus A_n}$ is bounded pointwise by 1 and converges $\mu$-almost everwhere to zero on $\cD$, so we have that \begin{align*}
    \lim_{n \to \infty} \int_{\cD} |p_n - h|^t d\mu & = \lim_{n \to \infty} \int_{A_n} |p_n - h|^t d\mu + \int_{\cD \setminus A_n} |p_n - h|^t d\mu \\ & \leq \limsup_{n \to \infty} \int_{A_n} |p_n - h|^2 d\mu + \int_{\cD} |p_n - h|^t 1_{\cD \setminus A_n} d\mu \\ & = 0,
\end{align*} where in the last step we used the dominated convergence theorem on the second integral. This shows that $h \in \Po^t(\mu)$. The set of $h \in L^\infty(m)$ which live only on $F$ is clearly a dense subset of $L^t(\mu_F)$, and so by taking closures we obtain $L^t(\mu_F) \subset \Po^t(\mu)$ also for $t \in (0,1]$.
\end{proof}

\subsection{A Poisson integral estimate}

Our primary tool for ensuring the critical property $(iv)$ in \thref{reductionLemma} will be the following estimate for Poisson integrals. 

\begin{lem}
\thlabel{SuperExpPoissonEstimate}
Let $\{I_j\}_j$ be a finite family of disjoint intervals on $\T$, and let $f = \sum_{j} f_j$ be a real-valued function such that
\begin{enumerate}[(i)]
        \item $f_j$ is supported on the interval $I_j$,
        \item $\int_{I_j} f_j \, d\m = 0$,
        \item $\int_{I_j} |f_j| \, d\m \leq C,$ where $C$ is some positive constant which is independent of $j$.
\end{enumerate}
    Then we have the Poisson integral growth estimate \[ P_f(z) \leq \frac{4C}{1-|z|},\] where $C$ is the constant in $(iii)$ above. 
\end{lem}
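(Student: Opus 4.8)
The plan is to use the mean-zero condition $(ii)$ on each piece $f_j$ to replace the Poisson kernel by its oscillation over the supporting interval $I_j$, and then to sum these local contributions by invoking the disjointness of the intervals together with the total variation of the Poisson kernel over the whole circle. The disjointness is crucial: without it, the (otherwise arbitrary) number of intervals would enter the estimate.

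By linearity of the Poisson integral, $P_f = \sum_j P_{f_j}$, so it suffices to estimate the sum. Fix $z \in \D$ and write $\Phi_z(x) = \frac{1-|z|^2}{|x-z|^2}$ for the Poisson kernel regarded as a function of $x \in \T$. For each $j$, choose an arbitrary point $x_j \in I_j$. Since $\int_{I_j} f_j \, d\m = 0$ by $(ii)$, subtracting the constant $\Phi_z(x_j)$ costs nothing:
\[ P_{f_j}(z) = \int_{I_j} \big( \Phi_z(x) - \Phi_z(x_j) \big) f_j(x) \, d\m(x). \]

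First I would bound the oscillation. For any two points $x, x_j$ lying in the arc $I_j$, the difference $|\Phi_z(x) - \Phi_z(x_j)|$ is at most the total variation $V_j$ of $\Phi_z$ over $I_j$, since the sub-arc joining them is contained in $I_j$. Combined with the $L^1$-bound $(iii)$, this gives
\[ |P_{f_j}(z)| \leq V_j \int_{I_j} |f_j| \, d\m \leq C \, V_j. \]
Because the intervals $I_j$ are pairwise disjoint, the local variations add up to at most the global one, $\sum_j V_j \leq V(z)$, where $V(z)$ denotes the total variation of $\Phi_z$ over all of $\T$. Summing the previous display over $j$ then yields $P_f(z) \leq C \, V(z)$.

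It remains to compute $V(z)$, which is the heart of the argument but is entirely elementary. Writing $z = re^{i\phi}$, the distance $|x-z|$ increases monotonically from $1-r$ to $1+r$ as $x$ traverses either of the two arcs running from the point nearest $z$ to its antipode; hence $\Phi_z$ is unimodal on $\T$, with maximum $\frac{1+r}{1-r}$ and minimum $\frac{1-r}{1+r}$. Its total variation is therefore twice the gap between these extreme values,
\[ V(z) = 2 \Big( \frac{1+r}{1-r} - \frac{1-r}{1+r} \Big) = \frac{8r}{1-r^2}. \]
Since $\frac{8r}{1-r^2} = \frac{4}{1-r} \cdot \frac{2r}{1+r}$ and $\frac{2r}{1+r} < 1$ for $r \in (0,1)$, I conclude $P_f(z) \leq C\,V(z) < \frac{4C}{1-|z|}$, the desired estimate (with a little room to spare, which is what produces the clean constant $4$). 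I do not expect a genuine obstacle: the step most in need of care is recognizing that $\Phi_z$ is unimodal, so that its variation is simply $2(\max - \min)$, together with bookkeeping of constants so that the stated factor $4$ emerges.
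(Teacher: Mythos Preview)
Your proof is correct and follows essentially the same idea as the paper: exploit the mean-zero condition on each $I_j$ to replace the Poisson kernel by its local oscillation, then sum using disjointness and unimodality of the kernel. The only cosmetic differences are that the paper splits $f_j = f_j^+ - f_j^-$ (instead of subtracting a constant) and works with the oscillation $\sup_{I_j}\Phi_z - \inf_{I_j}\Phi_z$ rather than the total variation $V_j$; since oscillation is dominated by total variation and both sum over disjoint arcs to at most $2(\max-\min)$ for a unimodal function, the two bookkeepings lead to the same constant $4$.
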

For the sake of the proof of the lemma we introduce a notation. If a real-valued function $g$ and an interval $I$ are given, then we define the \textit{variation} of $g$ over $I$ as the non-negative number \[ \text{var}(g,I) := \sup_{x \in I} g(x) - \inf_{x \in I} g(x).\]

\begin{proof} By symmetry, it will be sufficient to show that \[P_f(r) = \int_{\T} P_r \, d\m \leq \frac{4C}{1-r},\] where $P_r = P_r(x) = \frac{1-r^2}{|x-r|^2}$ is the Poisson kernel. Let \[f_j^+(x) = \max[f_j(x),0]\] and \[f_j^-(x) = -\min[f_j(x), 0].\] It of course holds that \[ \int_{I_j} P_r f_j^+ \, d\m \leq \sup_{x \in I_j} P_r(x) \cdot \int_{I_j} f_j^+ \, d\m \] and \[ \inf_{x \in I_j} P_r(x) \cdot \int_{I_j} f_j^- \, d\m \leq \int_{I_j} P_r f_j^- \, d\m,\] from which we deduce, using property $(ii)$ in the statement of the lemma \begin{align*}
\int_{I_j} P_r f_j \, d\m & \leq \sup_{x \in I_j} P_r(x) \cdot \int_{I_j} f_j^+ \, d\m - \inf_{x \in I_j} P_r(x) \cdot \int_{I_j} f_j^- \, d\m  \\ &= \text{var}(P_r, I_j) \int_{I_j} f_j^+ \, d\m.
\end{align*} 
It follows now from $(i)$ and $(iii)$ that \begin{align*}
    \int_\T P_r f \,d\m & \leq \sum_{j} \text{var}(P_r, I_j) \int_{I_j} f_j^+ \\ & \leq C \sum_{j} \text{var}(P_r, I_j).
\end{align*}
But since the intervals in the family $\{I_j\}_j$ are disjoint, and the Poisson kernel is unimodal, we can in fact deduce the estimate \begin{equation} \label{MainEstimatePsnGrowthLemma}\sum_{j} \text{var}(P_r, I_j) \leq \frac{2(1+r)}{1-r} \leq \frac{4}{1-r},   
\end{equation} see Figure \ref{fig:figure} for a visual proof of this inequality. This establishes the required growth estimate.     
\end{proof}

\begin{figure}
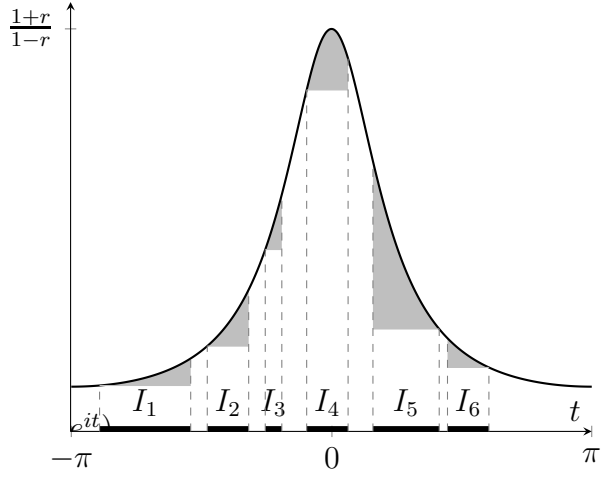

    \centering
    \includestandalone{VarPsnPlot}
     \caption{The height of each gray area equals $\text{var}(P_r, I_j)$}
    \label{fig:figure}
\end{figure}

\subsection{Main construction and proof of the Kriete-MacCluer conjecture}

Our task on identification of $L^t$-summands has now been reduced to a construction of real-valued functions $f_N$ on $\T$ which satisfy the five assumptions appearing in \thref{reductionLemma}. 

We need one more tool before going into our main construction. We recall that a collection of open intervals $\C = \{I\}$ is a \textit{Vitali covering} of a set $F \subset \T$ if for every $x \in F$ there exists an interval $I \in \C$ which contains $x$ and is of arbitrarily short length. The following formulation of Vitali's well-known theorem will be used in our construction. A proof can be found, for instance, in \cite[pages 128-129]{stein2009real}. 

\begin{lem}\textbf{(Vitali's covering theorem)}
\thlabel{VitaliCoveringTheorem} Let $F$ be a subset of $\T$ which has positive Lebesgue measure, and let $\C = \{I\}$ be a Vitali covering of $F$. Given any $\delta > 0$, there exists a finite collection $\{I_j\}_{i=j}^n \subset \C$ consisting of pairwise disjoint intervals for which we have \[ \vert F \setminus \cup_{i=j}^n I_j \vert < \delta.\]
\end{lem}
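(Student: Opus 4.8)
The plan is to prove this finite version of Vitali's theorem by the classical greedy selection of intervals of nearly maximal length, which is exactly the argument of the cited reference \cite{stein2009real}. Since $F \subseteq \T$ has finite measure, I would first fix a small $\epsilon > 0$ and an open set $O \supseteq F$ with $|O| < |F| + \epsilon$; because $O$ is open and every point of $F$ is covered by arbitrarily short intervals of $\C$, I may discard from $\C$ every interval not contained in $O$ without destroying the Vitali covering property of $F$. Thus I assume from the start that every $I \in \C$ satisfies $I \subseteq O$, so in particular all candidate lengths are bounded by $|O|$.

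Next I would select intervals greedily. Set $d_1 = \sup\{|I| : I \in \C\}$ and choose $I_1 \in \C$ with $|I_1| > d_1/2$. Having chosen pairwise disjoint $I_1, \dots, I_k$, I consider the subfamily of $\C$ whose members are disjoint from $I_1 \cup \dots \cup I_k$; if this subfamily is empty the selection terminates, in which case every point of $F$ outside the measure-zero boundary lies in $\cup_{j \le k} I_j$ and the conclusion is immediate. Otherwise I let $d_{k+1}$ be the supremum of the lengths of that subfamily and choose $I_{k+1}$ in it with $|I_{k+1}| > d_{k+1}/2$. Assuming the process does not terminate, it yields an infinite pairwise disjoint sequence inside $O$, so $\sum_k |I_k| \leq |O| < \infty$; hence $|I_k| \to 0$ and the tail sums $\sum_{k > n} |I_k|$ can be made arbitrarily small.

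The heart of the matter is the covering estimate for the tail. Given $\delta > 0$, I would choose $n$ so large that $\sum_{k > n} |I_k| < \delta/5$, and then show that $F \setminus \cup_{j=1}^n I_j$ is contained, up to a set of measure zero, in $\cup_{k > n} I_k^*$, where $I_k^*$ is the interval concentric with $I_k$ of five times the length. Indeed, a point $x \in F$ outside the closure of $\cup_{j \le n} I_j$ lies in some $I \in \C$ disjoint from $I_1, \dots, I_n$; this $I$ must meet some selected interval, for otherwise $I$ would remain a candidate at every stage and the bound $|I_k| > d_k/2 \geq |I|/2$ would contradict $|I_k| \to 0$. Taking the least index $m$ with $I \cap I_m \neq \emptyset$, disjointness from $I_1, \dots, I_n$ forces $m > n$, while availability of $I$ through stage $m-1$ gives $|I| \leq d_m < 2|I_m|$; combining $I \cap I_m \neq \emptyset$ with $|I| < 2|I_m|$ places $x$ within $\tfrac{5}{2}|I_m|$ of the center of $I_m$, hence $x \in I_m^*$. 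Summing, $|F \setminus \cup_{j=1}^n I_j| \leq \sum_{k>n} |I_k^*| = 5\sum_{k>n} |I_k| < \delta$. I expect the one genuinely delicate point to be this last covering claim — the bookkeeping guaranteeing that the intercepting interval $I_m$ has index beyond $n$ and length comparable to $I$ — together with the harmless but necessary neglect of the measure-zero boundaries of the finitely many intervals $I_1, \dots, I_n$.
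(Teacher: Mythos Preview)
The paper does not prove this lemma at all; it merely cites \cite[pages 128--129]{stein2009real}. Your proposal reproduces precisely the classical greedy-selection argument found in that reference, and it is correct, including the handling of the tail covering by the dilated intervals $I_k^*$ and the measure-zero boundary sets.
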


The main result of the section is the following.

\begin{prop} \thlabel{FwSplittingOffProp}
Let $G$ be a function which satisfies the condition \eqref{ExpDecTag}, $w \in L^1(m)$ be a non-negative measurable function, and $E$ and $F$ be respectively the carrier and the residual set of $w$, as defined in \eqref{carrierwEdef} and \eqref{reswdef}. Assume that $F$ has positive $m$-measure. If $d\mu = G(1-|z|)dA(z) + w \, d\m$, then for any finite $t > 0$ we have that $L^t(\mu_F) \subset P^t(\mu)$.
\end{prop}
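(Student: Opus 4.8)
The plan is to invoke \thref{reductionLemma}, which reduces the desired inclusion $L^t(\mu_F) \subset \Po^t(\mu)$ to the construction, for each large $N$, of a bounded real-valued function $f_N$ on $\T$ and an accompanying set $A_N$ satisfying the five conditions $(i)$--$(v)$ there. Since \eqref{ExpDecTag} is already assumed and the reduction lemma does all the functional-analytic work, the entire content of the proposition lies in producing these $f_N$. The key structural fact I would exploit is the one noted right after the definition of $F$: up to a null set, $F$ is the complement in $E$ of a countable union of open intervals on each of which $\log w$ is integrable. Equivalently, \emph{every} open interval $I$ meeting $F$ in positive measure has $\int_I \log w \, d\m = -\infty$. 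This is what will let me drive $f_N$ to $-\infty$ on $F$ while keeping $\int_\T \exp(f_N) w \, d\m$ bounded.

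The construction I would carry out runs as follows. Fix a large $N$. For each point $x \in F$, the non-integrability $\int_I \log w \, d\m = -\infty$ on small intervals $I \ni x$ means that on arbitrarily short intervals around $x$ the local average $\frac{1}{|I|}\int_I \log w \, d\m$ is as negative as we like; in particular I can select, for each $x \in F$, arbitrarily short intervals $I$ for which $\frac{1}{|I|}\int_I \log w \, d\m \leq -2N$ (say). The family of all such intervals, ranging over $x \in F$, is a Vitali covering of $F$, so by \thref{VitaliCoveringTheorem} I extract a \emph{finite} disjoint subfamily $\{I_j\}_{j=1}^n$ covering all of $F$ except a set of measure less than a prescribed $\delta = \delta(N) \to 0$. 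I then set $A_N := \cup_j I_j$, which immediately gives condition $(iii)$, namely $|F \setminus A_N| \to 0$. On each $I_j$ I define $f_N$ to be a mean-zero function: roughly $f_N = -N$ on the bulk of $I_j$ compensated by a large positive spike on a small portion, arranged so that $\int_{I_j} f_N \, d\m = 0$ (guaranteeing $(i)$ after summing, since the pieces have disjoint supports) and so that $f_N \leq -N$ on $I_j$ except on the thin spike, yielding $(ii)$ on $F \cap A_N$ after shrinking the spike into the complement of $F$ or controlling its measure. Off $A_N$ I set $f_N \equiv 0$. Writing $f_N = \sum_j f_{N,j}$ with $f_{N,j}$ supported on $I_j$, mean zero, and with $\int_{I_j} |f_{N,j}| \, d\m$ uniformly bounded (the total variation on each interval is comparable to $N |I_j|$, and I must arrange the lengths so this stays bounded), \thref{SuperExpPoissonEstimate} delivers exactly the Poisson bound $P_{f_N}(z) \leq \frac{4C}{1-|z|}$ required by $(iv)$.

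The remaining and genuinely delicate point is condition $(v)$: $\int_\T \exp(f_N) w \, d\m \leq C$ uniformly in $N$. On $\T \setminus A_N$ we have $f_N = 0$, contributing $\int_{\T \setminus A_N} w \, d\m \leq \|w\|_{L^1} < \infty$. On each $I_j$ the dangerous contribution comes from the positive spike, where $\exp(f_N)$ is large; here I must use the local non-integrability of $\log w$ to ensure $w$ is correspondingly small. The mechanism is a Jensen/concavity argument: because $\frac{1}{|I_j|}\int_{I_j}\log w \, d\m \leq -2N$, the weight $w$ is extremely small in an averaged logarithmic sense on $I_j$, and I must balance the size of the exponential spike against this smallness so that $\int_{I_j} \exp(f_N) w \, d\m$ is controlled by, say, $C|I_j|$, which then sums to something bounded. \textbf{This balancing is the main obstacle}: the spike in $f_N$ that is forced on me by the mean-zero constraint $(i)$ works directly against $(v)$, and I must calibrate the spike height and width against the logarithmic average of $w$ so that $(ii)$, $(iv)$ and $(v)$ hold simultaneously with constants independent of $N$. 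The natural way to thread this needle is to choose the spike to live precisely where $w$ is smallest on $I_j$, or alternatively to define $f_N$ on $I_j$ directly in terms of a truncation of $-\log w$ (capped at level $N$) minus its mean, so that on the spike $\exp(f_N)$ is offset by a genuinely small $w$; the truncation keeps $f_N$ bounded as required, and the uniform $L^1$-bound on $|f_{N,j}|$ needed for $(iv)$ then follows from the uniform control on the intervals' total lengths. Verifying that a single definition of $f_N$ satisfies all five conditions at once, with the constant in $(v)$ not blowing up with $N$, is the crux of the argument.
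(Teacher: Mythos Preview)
Your architecture is exactly the paper's: invoke \thref{reductionLemma}, build $f_N=\sum_j f_{N,j}$ on a finite disjoint Vitali family $\{I_j\}$ covering most of $F$, enforce $\int_{I_j} f_{N,j}\,d\m=0$, and apply \thref{SuperExpPoissonEstimate} for $(iv)$. You also correctly identify $(v)$ as the crux and have the right instinct that the positive ``spike'' of $f_{N,j}$ must sit where $w$ is small. Two genuine gaps remain, however.

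First, you are missing the preliminary reduction the paper makes: replace $F$ by $F\cap\{w>c\}$ for an arbitrary $c>0$ (and take the countable union at the end). Without this, your plan for $(ii)$ breaks. You need $f_N\le -N$ on \emph{all} of $F\cap A_N$, so the spike must avoid $F$ entirely; ``controlling its measure'' does not suffice. Once $F\subset\{w>c\}$, any spike placed in $\{w\le c\}$ or in $\{w=0\}$ is automatically disjoint from $F$, and this is exactly why the paper performs that reduction.

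Second, your mechanism for $(v)$ via a ``Jensen/concavity argument'' on the average of $\log w$ is not how the bound is obtained, and it is hard to see how it would give a constant independent of $N$. The paper's device is more direct. On each $I_j$ (chosen with $\int_{I_j}\log w\,d\m=-\infty$ \emph{and} $N|I_j|\le 1$, the latter being precisely the length condition you allude to for $(iv)$) one distinguishes two cases. If $|I_j\setminus E|>0$, put the entire positive part of $f_{N,j}$ on $I_j\setminus E$; there $w=0$, so $\exp(f_{N,j})w=0$. Otherwise the divergence of $\int_{I_j}\log(1/w)\,d\m$ must come from $\{d\le w\le c\}$ for small $d$; pick a subset $I_j^+$ of this region with $\int_{I_j^+}\log(1/w)\,d\m\in[N|I_j|,\,2]$, and set $f_{N,j}=\log(1/w)$ on $I_j^+$. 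Then on the spike $\exp(f_{N,j})w\equiv 1$ exactly, and off the spike $f_{N,j}\le 0$, so $\int_\T \exp(f_N)w\,d\m\le 1+\|w\|_{L^1(\m)}$ uniformly in $N$. The restriction to $\{w\ge d\}$ is what makes $f_{N,j}$ bounded, and the calibration $\int_{I_j^+}\log(1/w)\,d\m\asymp N|I_j|$ simultaneously gives the uniform $L^1$ bound $\int_{I_j}|f_{N,j}|\,d\m\le 4$ needed for \thref{SuperExpPoissonEstimate} and forces $-f_{N,j}\ge N$ on $I_j\setminus I_j^+\supset I_j\cap F$. Your ``truncation of $-\log w$ minus its mean'' is in the right spirit, but without isolating the spike on a carefully sized subset $I_j^+$ you cannot get $(ii)$, $(iv)$, $(v)$ and boundedness to hold with $N$-independent constants all at once.
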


\begin{proof}
To start, note that the set $F$ is equal, up to a set of Lebesgue measure zero, to the countable union \[\cup_{n = 1}^\infty F_{n} = \cup_{n = 1}^\infty F \cap \{ x \in E : w(x) > 1/n\}.\] It is thus sufficient to show that for each $n$ we have the containment $L^t(\mu_{F_n}) \subset P^t(\mu)$. Consequently, we lose no generality assuming that the set $F$ is contained in \begin{equation}
     \label{FwPartAssumption}
\{ x \in E : w(x) > c\} \end{equation} for some arbitrary number $c > 0$.

Let $N > 0$ be some large number. By definition of $F$, for each point $x \in F$ there exist intervals $I$ of arbitrary small length which contain $x$ and for which we have $\int_I \log w \, d\m = -\infty.$ Moreover, it is easy to see that the family $\mathcal{C} = \mathcal{C}_N$ of all such intervals which additionally satisfy the condtition \begin{equation}
    \label{ShortIntervalVitaliCover}
    N|I| \leq 1
\end{equation} constitutes a Vitali covering of the set $F$. Extract from $\mathcal{C}$ a finite collection $\{I_j\}_j$ of pairwise disjoint intervals such that \begin{equation} \label{FminusUnionIjmeasure}
    |F \setminus \cup_j I_j| < 1/N.
\end{equation}
We proceed to show how to define \begin{equation}
    \label{fNformula} f_N = \sum_j f_{N,j}
\end{equation} where $f_{N,j}$ will live only on the interval $I_j$, so that all the five assumptions of \thref{reductionLemma} will be satisfied. Fix one of the sets $ I_j$, and recall the definition of $c$ in \eqref{FwPartAssumption}. We decompose the logarithmic integral of $w$ over $I_j$ into three pieces: 
\begin{align*}
+\infty & = \int_{I_j} \log (1/w) \, d\m \\
        & = \int_{I_J \setminus E} \log(1/w) \, d\m + \int_{ \{ w > c\} \cap I_j} \log (1/w) \, d\m \\
        & + \lim_{d \to 0^+} \int_{\{ d \leq w \leq c\} \cap I_j} \log (1/w) \, d\m.
\end{align*} The second piece satisfies \[ -\infty < \int_{ \{ w > c\} \cap I_j} \log (1/w) \, d\m < +\infty.\] Indeed, the upper bound follows from the inequality $\log (1/w) < \log (1/c)$ which holds on the set $\{ w > c\}$, and the lower bound is an easy consequence of the integrability of $w$. We see from this that at least one of the two remaining pieces must be equal to $+\infty$. Consequently, we deduce that at least one of the following situations must occur: either we have that 
\begin{equation}
    I_j \setminus E \text{ has positive Lebesgue measure} \label{case1} \tag{C1}
\end{equation}
or we have that the family of sets \[E_d := \{ x \in E : d \leq w(x) \leq c \}\] satisfies 
\begin{equation} \label{case2} \tag{C2}
  \lim_{d \to 0^+} \int_{I_j \cap E_d} \log (1/w) \, d\m = \infty.
\end{equation} Of course, it might be so that both situations occur.

The first alternative \eqref{case1} is simpler. In this case, we may set \begin{equation}
    \label{fjdef0}
    f_{N,j} = N\frac{|I_j \cap F|}{|1_{I_j \setminus E}|}1_{I_j \setminus E} - N 1_{I_j \cap F}.
\end{equation} If we are in the situation \eqref{case2}, then there exists a positive number $d$ and some measurable subset $I_j^+$ of $I_j \cap E_d$ for which we have \begin{equation} \label{Iplusestimate}
    N|I_j| \leq \int_{I_j^+} \log (1/w) \, d\m \leq 2 N|I_j| \leq 2.
\end{equation} Indeed, by \eqref{case2} there exists some $d$ small enough so that $|N|I_j \leq \int_{I_j \cap E_d} \log (1/w) \, d\m$, and then by absolute continuity of the finite measure $\log (1/w) 1_{I_j \cap E_d} d\m$ we might pick a subset $I_j^+$ so that \eqref{Iplusestimate} holds. Let \begin{equation}
    \label{alphajdef}\alpha_j := \int_{I_j^+} \log (1/w) \, d\m \simeq N|I_j|
\end{equation} and \[I_j^- := I_j \setminus I_j^+\] If \eqref{case2} holds, then we may set \begin{equation} \label{fjdef1}
    f_{N,j} = \log(1/w) 1_{I^+_j} - \frac{\alpha_j}{|I_j^-|}1_{I_j^-}.
\end{equation} If both \eqref{case1} and \eqref{case2} hold, then we are free to construct $f_{N,j}$ in either way. 

We now verify that this way of constructing $f_{N,j}$ implies that the functions $f_N$ in \eqref{fNformula} satisfy all five conditions of \thref{reductionLemma}. Both alternative definitions of $f_{N,j}$ ensure that \[ \int_{I_j} f_{N,j} \, d\m = 0,\] which means of course that \[\int_\T f_N \, d\m = \sum_{j} \int_{I_j} f_{N,j} \, d\m = 0.\] Hence $(i)$ of \thref{reductionLemma} holds. By setting $A_N := \cup_j I_j$, we see from \eqref{FminusUnionIjmeasure} that assumpion $(iii)$ of \thref{reductionLemma} also holds. Next, we verify $(ii)$. If $f_{N,j}$ was constructed according to \eqref{fjdef1}, and $x \in I_j \cap F$, then $w(x) > c$, and so $x \not\in I_j^+$. Thus from \eqref{alphajdef} we deduce that in the second case \[ -f_{N,j}(x) = \frac{\alpha_j}{|I_j^-|} \geq \frac{\alpha_j}{|I_j|} \geq N,\] while clearly $-f_{N,j}(x) = N$ if $x \in I_j \cap F$ and we constructed $f_{N,j}$ according to \eqref{fjdef0}. We have hence verified that $(ii)$ of \thref{reductionLemma} is satisfied. The assumption $(iv)$ is satisfied as a consequence of \thref{SuperExpPoissonEstimate} and \eqref{ShortIntervalVitaliCover}, since from $\int_{I_j} f_{N,j} \, d\m = 0$ we deduce \[ \int_{I_j} |f_{N,j}| \, d\m = 2 \int_{I_j} \max[f_{N,j}, 0] \, d\m\] which, in the first case, can be estimated by \[ 2 \int_{I_j} \max[f_{N,j}, 0] \, d\m = 2 N |I_j \cap F| \leq 2 N|I_j| \leq 2,\] and similarly in the second case by 
\[ 2 \int_{I_j} \max[f_{N,j}, 0] \leq 2 \alpha_j \leq 4 N|I_j| \leq 4.\]
Finally, the fifth assumption in \thref{reductionLemma} is satisfied since at any $x \in \T$ the function $\exp(f_N)$ will attain a value which is either less than or equal to $1$, or equal to $\exp(f_{N,j}(x))$ for some unique index $j$. If $f_{N,j}$ was constructed according to the first case, and so is given by \eqref{fjdef0}, then \[\exp(f_{N,j}(x))w(x) = 0\] at any point $x$ where $f_{N,j}(x) > 0$, and if it was constructed according to the second case in \eqref{fjdef1}, then \[\exp(f_{N,j}(x))w(x) = 1\]  whenever $f_{N,j}(x) > 0$. So in any case, \[ \int_\T \exp(f_N)w \, d\m \leq \int_\T (1 + w) \, d\m = 1 + \|w\|_{L^1(m)}.\] Since $N > 0$ is arbitrary, we have now verified the five requirements to apply \thref{reductionLemma}, and so the proof is complete. 
\end{proof}

As an immediate corollary, we see that we have proved the conjecture of Kriete and MacCluer from \cite[Section 9]{kriete1990mean}, in an even stronger form than asserted.

\begin{proof}[Proof of \thref{KrieteMaccluerConjectureProof}]
    The assumption that $\log w$ is not integrable on any interval clearly implies that the set $F$ in \thref{FwSplittingOffProp} coincides with the carrier set $E$ of the weight $w$, defined in \eqref{carrierwEdef}. Thus the result follows directly from \thref{FwSplittingOffProp}.
\end{proof}

\section{Identifying the analytic summand}

\label{AnalyticSummandSection}

We specialize now to the Hilbertian setting $t=2$ and show that if $I$ is any interval on which the weight $w$ is $\log$-integrable, then this interval has trivial intersection with any $L^2$-summand contained in $\Po^2(\mu)$ of the form \eqref{mustructre}. The result is more or less anticipated from the work of Kriete and MacCluer, \cite[Theorem D]{kriete1990mean}. However, we shall need a slightly more specialized statement than what appears there. Consequently, some of the techniques in the proofs below are only a minor adaptation of the ones used in their work.

\subsection{A criterion for local irreducibility} The next lemma is the key to showing irreducibility of a space.

\begin{lem} \thlabel{irreducibilityCriterionLemma}
    Assume that there exists a function \[f = f_\D + f_\T \in \Po^2(\mu_\D) \oplus L^2(\mu_\T)\] which is orthogonal to $\Po^2(\mu)$. Let $C$ be a carrier set of the part of $f$ living on $\T$: \[ C := \{ x \in \T : |f_\T(x)| > 0 \}.\] Then $\Po^2(\mu)$ does not contain the characteristic function of any measurable subset of $C$ which is of positive Lebesgue measure.
\end{lem}

\begin{proof}
    Assume, seeking a contradiction, that the characteristic function $1_B$ of the set $B \subset C$ is contained in $\Po^2(\mu)$, and $|B| > 0$. We will show that $f \equiv 0$ on $B$, which is a contradiction to $C$ being a carrier of $f_\T$. 
    
    We conclude from \thref{beurlingwienerlemma} (or from \thref{cor22}) that $1_B \in \Po^2(\mu)$ implies that $L^2(\mu_B)$ is contained in $\Po^2(\mu)$. Since the function $f$ is orthogonal to $\Po^2(\mu)$, we have that \[ \int_B f_\T \conj{g} w \, d\m = 0 \] for any function $g \in L^2(\mu_B)$. This means that \[ f w \equiv 0 \text{ on } B.\] Since the carrier set $C$ of $f_\T$ must be (up to a subset of Lebesgue measure zero) contained in the carrier set $E$ of $w$, this means that $f = 0$ almost everywhere on $B$. This is the desired contradiction.
\end{proof}

\subsection{Beurling-Malliavin majorants} 

We now want to construct a large family of functions $f$ to which \thref{irreducibilityCriterionLemma} applies. 
To this end we will find useful the following simple special case of the famous, and much more difficult to prove, general form of the Beurling-Malliavin multiplier theorem.

\begin{lem}
\thlabel{babyBMtheorem}
Let $P$ be a positive function defined on $\mathbb{R}$ which is even and decreasing for $x \geq 0$. If \begin{equation*}
\int_\mathbb{R} \frac{\log P(x) }{1+x^2} \, dx > -\infty,
\end{equation*}
then for any $a > 0$ there exists a $C^\infty$ function $g$ with support inside the interval $(-a,a)$ and with $g(0) \neq 0$, for which the Fourier transform \[ \hat{g}(x) = \int_{\mathbb{R}} e^{-ixt} g(t) dt\] satisfies the inequality \[ |\hat{g}(x)| \leq P(x), \quad x \in \mathbb{R}.\]
\end{lem}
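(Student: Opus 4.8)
The plan is to produce the desired compactly-supported function $g$ as a suitable convolution power of a simple building block, turning the one-sided integrability hypothesis on $\log P$ into a product bound on the Fourier transform. First I would reduce the problem to constructing, for each $\epsilon > 0$, a $C^\infty$ function supported in $(-\epsilon, \epsilon)$ whose Fourier transform decays at least as fast as some power of $P$; since $P$ is even and decreasing away from the origin, any bound of the form $|\hat{g}(x)| \leq P(x)$ will follow from sufficiently rapid majorization. The natural mechanism is to choose a sequence of lengths $a_k > 0$ with $\sum_k a_k < a$, form nonnegative bump functions $\phi_k$ supported in $(-a_k, a_k)$ each with $\int \phi_k \, dt = 1$, and consider the infinite convolution $g = \phi_1 * \phi_2 * \cdots$. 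The support of such a convolution lies in the interval of radius $\sum_k a_k < a$, and $\hat{g}(x) = \prod_k \hat{\phi_k}(x)$, with $\hat{g}(0) = 1 \neq 0$. The smoothness of $g$ can be guaranteed by ensuring the factors decay fast enough that the product is rapidly decreasing.

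The heart of the matter is to choose the lengths $a_k$ so that the product $\prod_k \hat{\phi_k}(x)$ is dominated by $P(x)$. For a single bump of width $a_k$, the Fourier transform $\hat{\phi_k}$ is close to $1$ on the scale $|x| \lesssim 1/a_k$ and begins to decay beyond it; a standard estimate gives $|\hat{\phi_k}(x)| \leq \min(1, (C/(a_k|x|))^{m})$ for any $m$, or for a well-chosen bump one obtains bounds like $|\hat{\phi_k}(x)| \leq \exp(-c \, a_k |x| / \log(\ldots))$. Taking logarithms converts the product into the sum $\log|\hat{g}(x)| = \sum_k \log |\hat{\phi_k}(x)|$, and the task becomes matching this sum to $\log P(x)$ from above. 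The classical construction (essentially the elementary half of Beurling--Malliavin, or the earlier Paley--Wiener / Ingham constructions) chooses the $a_k$ as a function of the prescribed majorant, exploiting precisely the convergence of $\int_\mathbb{R} \frac{\log P(x)}{1+x^2} \, dx$ to guarantee that enough ``budget'' $\sum_k a_k$ remains finite while achieving the required decay. Concretely, the logarithmic integral condition is exactly what is needed to sum up the widths: slow decay of $P$ (large $-\log P$) is permitted only on thin sets in the logarithmic-integral sense, which translates to small total length consumption.

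The main obstacle I anticipate is the bookkeeping that ties the admissible total length $\sum_k a_k < a$ to the integrability of $\log P$. The difficulty is that $P$ may decay very slowly in places and very rapidly in others, so one cannot use bumps of a single width; instead one must dyadically decompose the frequency axis, on each dyadic block $|x| \sim 2^j$ read off the required amount of decay from $P$, and allocate to that block a collection of bump factors whose combined width is controlled by the contribution of that block to $\int \frac{\log P}{1+x^2} \, dx$. Ensuring that these contributions sum to something finite---and bounded by the prescribed $a$ after a scaling---is the delicate estimate. A cleaner route, which I would prefer to keep the argument self-contained, is to invoke the explicit majorant construction for decreasing weights: set $\omega(x) = -\log P(x)$, which is even and increasing for $x \geq 0$ with $\int_\mathbb{R} \frac{\omega(x)}{1+x^2}\,dx < \infty$, and build $g$ whose transform satisfies $|\hat g(x)| \le \exp(-\omega(x))$ by a single infinite-convolution product indexed so that the distribution of widths mirrors the measure $d\omega$. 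The regularity $g \in C^\infty$ then follows automatically from the superpolynomial decay of $\hat g$, since $P(x) = \exp(-\omega(x))$ decays faster than any polynomial whenever $\omega$ is unbounded, and the case of bounded $\omega$ is trivial because then $P$ is bounded below and any fixed smooth bump works.
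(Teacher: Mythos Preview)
The paper does not actually give a proof of this lemma; it simply records it as a known special case and points to pages 276--277 of the Havin--J\"oricke book for the argument. So there is no ``paper's own proof'' to compare against, only the cited reference.

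Your outline is the standard construction one finds in that reference and elsewhere: an infinite convolution $g=\phi_1*\phi_2*\cdots$ of normalized nonnegative bumps (indeed, one may take $\phi_k=\frac{1}{2a_k}1_{[-a_k,a_k]}$, so $\hat\phi_k(x)=\frac{\sin(a_k x)}{a_k x}$), support contained in $[-\sum a_k,\sum a_k]$, $\hat g(0)=1$, and $|\hat g(x)|=\prod_k|\hat\phi_k(x)|$. This is exactly the right mechanism, and for a monotone majorant the bookkeeping you flag as ``the main obstacle'' is in fact not delicate. With $\omega(x)=-\log P(x)$ even and nondecreasing on $[0,\infty)$, the hypothesis $\int_\mathbb{R}\omega(x)/(1+x^2)\,dx<\infty$ is equivalent (by monotonicity) to $\sum_{j\ge 0}\omega(2^j)2^{-j}<\infty$; one then chooses, for each $j$, roughly $n_j\asymp\omega(2^{j+1})$ factors of width $a_k\asymp 2^{-j}$, so that $\sum_k a_k\asymp\sum_j n_j 2^{-j}<\infty$ while for $|x|\in[2^j,2^{j+1}]$ the factors with width $\gtrsim 2^{-j}$ each contribute a fixed factor $<1$ to the product, giving $|\hat g(x)|\le c^{\,n_j}\le e^{-\omega(x)}$ after adjusting constants. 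A global dilation then brings the support inside $(-a,a)$. So your plan is correct and essentially coincides with the cited argument; what is missing is only the explicit choice of the $a_k$ just described, not a new idea.
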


A proof of \thref{babyBMtheorem} can be found in \cite[pages 276-277]{havinbook}. An exposition and proof of the general Beurling-Malliavin multiplier theorem is also contained in \cite{havinbook}.

\subsection{An estimate for the moments of $G$} For every analytic polynomial $p(z) = \sum_n p_n z^n$ we have that 

\begin{equation*}
    \int_{\D} |p|^2 d\mu_\D = \int_{\D} |p(z)|^2 G(1-|z|) \, dA(z) = \sum_{n} |p_n|^2 \alpha_n
\end{equation*}
where 
\begin{equation} \label{GmomentsEq}
    \alpha_n = 2 \int_0^1 r^{2n+1} G(1-r) dr
\end{equation} are the moments of the function $G(1-r)$. The rate of decay of $G(1-|z|)$ near the boundary of the disk and the rate of decay of the moments $\alpha_n$ is connected through the following lemma. 

\begin{lem} \thlabel{GmomentsLogSummability}
If $G$ is continuously differentible on $(0,1)$ and satisfies \eqref{LogLogIntTag}, then the function \begin{equation*}
        P(x) := \int_0^1 r^{|x|} G(1-r) dr, \quad x \in \mathbb{R}
    \end{equation*} satisfies \begin{equation} \label{MomentFuncPIntegrability}
        \int_\mathbb{R} \frac{\log P(x) }{1+x^2} \, dx > -\infty.
    \end{equation}
\end{lem}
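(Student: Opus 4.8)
The plan is to reduce the claimed logarithmic integrability to the hypothesis \eqref{LogLogIntTag} by producing an explicit lower bound for $P(x)$ whose logarithm, integrated against $dx/(1+x^2)$, converges. Write $\varphi(s) := \log(1/G(s)) = -\log G(s)$, so that $\varphi$ is positive (we may assume $G < 1$), continuously differentiable, and \emph{decreasing} on $(0,1)$ with $\varphi(s) \to +\infty$ as $s \to 0^+$, while \eqref{LogLogIntTag} reads precisely
\[ \int_0^1 \log \varphi(s) \, ds < \infty. \]
After the substitution $s = 1-r$ one has $P(x) = \int_0^1 (1-s)^{|x|} e^{-\varphi(s)} \, ds$. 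Since $P$ is even, positive, and continuous, its logarithm is bounded on bounded intervals and $1+x^2 \asymp x^2$ for large $x$; hence it suffices to prove $\int_{x_0}^\infty x^{-2} \log P(x) \, dx > -\infty$ for some large $x_0$.

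For each large $x$ I would localize the integral defining $P(x)$ near the point where the integrand is largest. Because $s \mapsto \varphi(s)/s$ is continuous and strictly decreasing from $+\infty$ to $\varphi(1)$, there is a unique $s_0 = s_0(x) \in (0,1)$ solving the balance equation $x s_0 = \varphi(s_0)$, and $s_0(x)$ decreases to $0$ as $x \to \infty$. Restricting the integral to $[s_0, 2s_0]$ and using that $(1-s)^x$ is decreasing while $e^{-\varphi(s)}$ is increasing in $s$, together with $\log(1-2s_0) \geq -4 s_0$ for small $s_0$, gives
\[ P(x) \geq s_0 \,(1-2s_0)^x e^{-\varphi(s_0)} \geq s_0\, e^{-4 x s_0} e^{-\varphi(s_0)}, \]
so that, using $x s_0 = \varphi(s_0)$,
\[ \log P(x) \geq \log s_0 - 4 x s_0 - \varphi(s_0) = \log s_0 - 5\varphi(s_0). \]

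It then remains to show that $\int_{x_0}^\infty x^{-2}\big(-\log s_0(x) + 5\varphi(s_0(x))\big)\,dx < \infty$. I would change variables from $x$ to $s_0$ through $x = \varphi(s_0)/s_0$, writing $s_* = s_0(x_0)$, for which
\[ |dx| = \frac{\varphi(s_0) + s_0\,|\varphi'(s_0)|}{s_0^2}\,ds_0, \qquad \frac{1}{x^2} = \frac{s_0^2}{\varphi(s_0)^2}. \]
The term in $\varphi$ transforms into $5\int_0^{s_*}\big(1 + s_0 |\varphi'(s_0)|/\varphi(s_0)\big)\,ds_0$, and an integration by parts turns $\int_0^{s_*} s_0 |\varphi'|/\varphi \, ds_0 = -\int_0^{s_*} s_0 (\log\varphi)'\,ds_0$ into $-s_*\log\varphi(s_*) + \int_0^{s_*}\log\varphi\,ds_0$, which is finite by \eqref{LogLogIntTag} once the boundary term $s_0 \log\varphi(s_0) \to 0$ as $s_0 \to 0$ is checked (a standard consequence of the integrability of the decreasing function $\log\varphi$). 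The term in $\log s_0$ splits as $\int_0^{s_*}(-\log s_0)/\varphi\,ds_0 + \int_0^{s_*}(-\log s_0)\,s_0|\varphi'|/\varphi^2\,ds_0$; since $\varphi \geq 1$ near $0$, the first piece is dominated by $\int_0^{s_*}(-\log s_0)\,ds_0 < \infty$, while a second integration by parts (against $d(1/\varphi)$, using $-s_0\log s_0 \to 0$) bounds the second piece by $\int_0^{s_*} \varphi^{-1}\,ds_0$ plus finite boundary terms.

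I expect the main obstacle to be exactly this last bookkeeping: organizing the change of variables and the two integrations by parts so that every contribution collapses either to $\int_0^{s_*}\log\varphi\,ds$ or to a manifestly convergent integral, and rigorously justifying that the boundary terms vanish, which is where \eqref{LogLogIntTag}, via $s\log\varphi(s)\to 0$, is genuinely used. A secondary point to handle with care is the well-definedness and monotonicity of $s_0(x)$ and the elementary estimate $\log(1-2s_0)\geq -4s_0$, but these become routine once $x_0$ is taken large enough that $s_0(x)$ is small and $\varphi(s_0(x)) \geq 1$.
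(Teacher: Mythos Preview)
Your argument is correct, but it follows a different route from the paper. The paper introduces the Legendre-type envelope $k(x)=\inf_{y\in(0,1]}\big(\varphi(y)+xy\big)$, quotes a lemma of Beurling to the effect that \eqref{LogLogIntTag} implies $\int_0^\infty k(x)/(1+x^2)\,dx<\infty$, and then localizes the integral for $P(x)$ near the true Laplace saddle (the point $\zeta$ with $\varphi'(\zeta)=-2x$) to obtain $P(x)\geq e^{-k(2x)}/(4x)$; the conclusion then follows immediately. You instead localize at the \emph{different} balance point $s_0$ defined by $\varphi(s_0)=xs_0$, obtain $\log P(x)\geq\log s_0-5\varphi(s_0)$, and then carry out the reduction to \eqref{LogLogIntTag} by hand via the change of variables $x\leftrightarrow s_0$ and two integrations by parts. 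Both choices of balance point give adequate lower bounds; your version has the virtue of being completely self-contained (no appeal to Beurling's lemma), at the price of the somewhat delicate bookkeeping you describe, whereas the paper's version is shorter because it outsources precisely that bookkeeping to the cited result. One small remark: your integration-by-parts step needs $s_0(x)$ to be $C^1$ in $x$, which follows from the inverse function theorem once you observe that $(\varphi(s)/s)'=(s\varphi'(s)-\varphi(s))/s^2<0$ since $\varphi'\leq 0$ and $\varphi>0$; you may want to make that explicit.
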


Note that $P$ in the above statement is an even and decreasing function of $x \geq 0$. We also have \[ P(2n+1) = \alpha_n/2.\] 

The lemma follows from a combination of two arguments in \cite{beurling1972analytic} and \cite{havinbook}. 

\begin{proof}
Following Beurling in \cite{beurling1972analytic}, we set \[ m(x) := \log(1/G(x)), \quad x \in (0,1]\] and \begin{equation} \label{LegendreEnvelopeK} k(x) := \inf_{y \in (0,1]} m(y) + yx, \quad x > 0.\end{equation} Then $m$ is a decreasing, non-negative and differentiable on $(0,1)$, and $k$ is positive. Beurling proves (see \cite[Lemma 1]{beurling1972analytic}) that our assumption \eqref{LogLogIntTag} on $G$ implies that \begin{equation} \label{kintegrability} \int_0^\infty \frac{k(x)}{1+x^2} dx < \infty.\end{equation}
Now, we follow \cite[Page 229]{havinbook}. In the reference, the context is somewhat different and some additional assumptions on $m$ are present, but they are not needed for our purposes. For all sufficiently large positive $x$, we will now estimate $P(x)$ from below in terms of $k(x)$. We have \begin{align}
    P(x) & = \int_0^1 r^x G(1-r) \,dr \label{PxEstimate} \\
        & = \int_0^1 \exp\Big( x \log r - m(1-r)\Big) dr \nonumber \\
        & \geq \int_0^1 \exp\Big( 2x(r-1) - m(1-r) \Big) dr \nonumber \\
        & = \int_{0}^1 \exp\Big(-2xt - m(t)\Big) dt.  \nonumber
\end{align}
Since $\lim_{y \to 0^+} m(y) = +\infty$ the infimum on the right-hand side of \eqref{LegendreEnvelopeK} is attained at least at one point $y \in (0,1]$. By comparing the values of the expression in \eqref{LegendreEnvelopeK} for $y = 1/2$ and $y = 1$, we see that for sufficiently large $x$, the the infimum will be actually be attained inside of the interval $(0,1)$. At  any such point $y$ we must have, by basic calculus, the equality $m'(y) = -x$. But $m'(y) = -G'(y)/G(y)$, and this expression reveals that $m'(y)$ is bounded uniformly on each interval of form $[\delta,1], \delta > 0$. It follows that \[ \lim_{x \to +\infty} \sup_{m'(y) = -x} y = 0,\] so for sufficiently large $x$, there is always a point $\zeta = \zeta_x$ which satisfies $\zeta_x < 1/2$ and $k(x) = m(\zeta_x) + \zeta_x x$. Returning to \eqref{PxEstimate}, we use that $m$ is decreasing to estimate \begin{align*}
    P(x) & \geq \int_{\zeta_{2x}}^1 \exp\Big(-2xt - m(t)\Big) dt \\
    & \geq \exp\big(-m(\zeta_{2x})\big) \int_{\zeta_{2x}}^1 \exp (-2xt) dt \\
    & = \exp\Big(-m(\zeta_{2x})\Big)\frac{\exp(-2x\zeta_{2x}) - \exp(-2x)}{2x} \\
    & = \frac{\exp\big(-k(2x)\big)}{2x} \Big(1-\exp( -2x + 2x\zeta_{2x})\Big) \\
    & \geq \frac{\exp\big(-k(2x)\big)}{4x}.
\end{align*} In the last step we used that $\zeta_{2x} < 1/2$ when $x$ is sufficiently large, and consequently $1 - \exp(-2x + 2x \zeta_{2x})$ is larger than $1/2$ when $x$ is sufficiently large. Now \eqref{MomentFuncPIntegrability} follows easily from \eqref{kintegrability} and our last estimate.     
\end{proof}

\subsection{Finalizing the irreducibility proof}

Recall the definition of the residual set $F$ in \eqref{reswdef}, which is a subset of the carrier $E$ of $w$ defined in \eqref{carrierwEdef}. The set $E \setminus F$ has the property that for almost every $x$ in that set, there exists an interval $I$ containing $x$ on which $w$ is $\log$-integrable. We will now show that the space $\Po^2(\mu_\D + \mu_{\T \setminus F})$ contains no characteristic functions, i.e, that it is irreducible. Together with the result of \thref{FwSplittingOffProp}, this will essentially complete the proof of \thref{maintheoremSuperExp}. 

In the proof of the next lemma we use the same core idea as Kriete and MacCluer in their proof of \cite[Theorem D]{kriete1990mean}.

\begin{lem} \thlabel{irreducibilityLemmaLogInt} Assume that $G$ is as in \thref{GmomentsLogSummability}, and $w$ has the property that its carrier set is, up to a set of Lebesgue measure zero, a union of intervals $I$ on which we have \[ \int_I \log w \, d\m > -\infty.\] If $\mu$ is given by \eqref{mustructre}, then $\Po^2(\mu)$ is irreducible.
\end{lem}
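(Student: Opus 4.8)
The plan is to prove the contrapositive: assuming $\Po^2(\mu)$ is \emph{not} irreducible, I will derive a contradiction with the log-integrability hypothesis on the carrier of $w$. By definition, reducibility means $\Po^2(\mu)$ contains a non-trivial characteristic function $1_B$ for some measurable $B \subseteq \T$ with $|B| > 0$. The strategy is to combine \thref{irreducibilityCriterionLemma} with a construction of a suitable orthogonal function built from the Beurling--Malliavin machinery. First, I would invoke \thref{cor22} to replace $B$ by any convenient subset, and since the carrier of $w$ is a union of log-integrability intervals, I may assume that $B$ is contained in a single open interval $I$ on which $\int_I \log w \, d\m > -\infty$.

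The heart of the argument is to produce a function $f = f_\D + f_\T \in \Po^2(\mu_\D) \oplus L^2(\mu_\T)$ which is orthogonal to all of $\Po^2(\mu)$ and whose boundary part $f_\T$ has a carrier meeting $B$ in positive measure; then \thref{irreducibilityCriterionLemma} forbids $1_B \in \Po^2(\mu)$, contradicting the reducibility assumption. To construct such an $f$, I would exploit the orthogonality-plus-spectral-decay mechanism sketched in the introduction around \eqref{uniSpecDecayFeq}. Orthogonality of $f$ to every monomial $z^n$ (equivalently to $\Po^2(\mu)$) forces the relation
\[
\widehat{(f_\T)}_n \;=\; -\int_\D \conj{z}^{\,n} f_\D(z)\, G(1-|z|)\, dA(z), \qquad n \geq 0,
\]
so by Cauchy--Schwarz the non-negative Fourier coefficients of $f_\T$ are controlled by the moments $\alpha_n$ of \eqref{GmomentsEq}, giving a unilateral decay $|\widehat{(f_\T)}_n| \leq C\sqrt{\alpha_n}$. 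The whole point of \thref{GmomentsLogSummability} is that $\alpha_n = 2P(2n+1)$ with $P$ satisfying the logarithmic integrability \eqref{MomentFuncPIntegrability}, which is exactly the hypothesis needed to feed into \thref{babyBMtheorem}. I would therefore transport the problem from $\T$ to the line via a local chart on the interval $I$, and apply the Beurling--Malliavin majorant lemma with the even decreasing majorant $P$: this yields a non-zero $C^\infty$ function $g$ supported inside (the image of) $I$ whose Fourier transform is dominated by $P$. Pulling $g$ back to $\T$ produces a candidate $f_\T$ supported on $I$ with precisely the one-sided spectral decay matching $\sqrt{\alpha_n}$, and the matching decay lets me define the analytic companion $f_\D \in \Po^2(\mu_\D)$ so that the orthogonality relation above holds term by term. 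Crucially, because $f_\T$ is built from a smooth bump supported in $I$ and is not identically zero, its carrier $C$ has positive measure inside $I$, and by shrinking one arranges $|C \cap B| > 0$.

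The main obstacle I anticipate is the bookkeeping that ties together three a-priori independent pieces of data: the smoothness/support of the Beurling--Malliavin multiplier $g$ on the line, the translation of its Fourier decay into genuine membership of the reconstructed $f_\D$ in the weighted Bergman-type space $\Po^2(\mu_\D)$ (one must verify $\sum_n |\widehat{(f_\T)}_n|^2/\alpha_n < \infty$ so that a legitimate $L^2(\mu_\D)$ function $f_\D$ exists realizing those moments), and the requirement that $f_\T$ genuinely lives on the log-integrable interval $I$ rather than on its residual complement. The log-integrability of $w$ on $I$ enters to guarantee that $f_\T$, which is forced to have an integrable logarithm by the $H^2$-type structure of its spectrum, is compatible with living on a set where $w$ does not force degeneracy --- this is the point where the hypothesis on the carrier of $w$ is indispensable and where the argument of Kriete and MacCluer in \cite[Theorem D]{kriete1990mean} is adapted. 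Once the orthogonal $f$ with carrier meeting $B$ is in hand, \thref{irreducibilityCriterionLemma} closes the contradiction and establishes that $\Po^2(\mu)$ admits no non-trivial characteristic functions, i.e.\ is irreducible.
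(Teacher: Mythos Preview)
Your overall strategy matches the paper's: build, via \thref{babyBMtheorem} and \thref{GmomentsLogSummability}, a pair $f_\D + f_\T$ orthogonal to $\Po^2(\mu)$ whose boundary part has carrier inside a log-integrable interval, and then invoke \thref{irreducibilityCriterionLemma}. The contrapositive framing is harmless. But there is a genuine gap in the mechanism linking the construction to the weighted space.

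The orthogonality relation you write down is wrong. Orthogonality of $f$ to $z^n$ in $\Po^2(\mu)$ reads
\[
0=\int_\D f_\D\,\conj{z}^{\,n}\,G(1-|z|)\,dA+\int_\T f_\T\,\conj{z}^{\,n}\,w\,d\m,
\]
so it is the Fourier coefficients of $f_\T w$, not of $f_\T$, that must match the moments of $f_\D$. Your Beurling--Malliavin step therefore produces a candidate for $f_\T w$, call it $h$, supported on $J\subset I$ with $|h_n|$ dominated by (a variant of) $\alpha_n$. Passing from $h$ to an $f_\T\in L^2(\mu_\T)$ means controlling $h/w$, and this is precisely where the hypothesis $\int_I\log w\,d\m>-\infty$ does its work. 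Your description of this step (``$f_\T$ is forced to have an integrable logarithm by the $H^2$-type structure of its spectrum'') is not the right picture: $h$ is compactly supported on $\T$, so $\log|h|$ is certainly not integrable, and one-sided spectral decay gives no Szeg\H{o} conclusion. What the paper actually does is use the log-integrability of $w$ on $I$ to build an outer function $u\in H^\infty$ with $|u|=\min(w,1)$ on $I$; then $z^n u$ is still in $\Po^2(\mu)$, and testing orthogonality against $z^n u$ (rather than $z^n$) absorbs the weight and leads to $f_\T=h\,\conj{u}/w\cdot 1_I$, which is bounded. This outer-function trick is the missing idea in your proposal.

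A secondary point: with the bare majorant $P$ you get $|h_n|\lesssim\alpha_n$, and the summability check $\sum|h_n|^2/\alpha_n<\infty$ then needs $\sum\alpha_n<\infty$, which is not guaranteed. The paper uses $\tilde P(x)=2P(2|x|+1)/(1+|x|)$, gaining a factor $1/(1+n)$ so that $F_n=-h_n/\alpha_n$ satisfies $|F_n|\le 1/(1+n)$ and $\sum|F_n|^2\alpha_n<\infty$ follows from boundedness of $\alpha_n$.
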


\begin{proof}

Let $I$ be any interval on which $\log w$ is integrable, and let $x$ be an arbitrary point in the interior of $I$. We will construct a tuple $f_\D + f_\T \in \Po^2(\mu_\D) \oplus L^2(\mu_\T)$ which is orthogonal to $\Po^2(\mu)$ and such that the carrier set $C$ of $f_\T$ contains an interval $J \subset I$ centered at $x$. An application of \thref{irreducibilityCriterionLemma} then shows that $\Po^2(\mu)$ contains no characteristic functions of subsets of $J$. Since $x$ is arbitrary, we deduce from \thref{cor22} by patching together the interval $I$ with the subintervals $J$, that $\Po^2(\mu)$ contains no characteristic functions of subsets of $I$. Finally, since up tu a null set, the entire carrier of $w$ can be covered by intervals $I$ on which $\log w$ is integrable, it will follow that $\Po^2(\mu)$ is irreducible.

We proceed to construct such a tuple. Without loss of generality we can assume that $x = 1$ and that $I$ is some interval centered at $1$. Let $P$ be as in \thref{GmomentsLogSummability} and set \[ \Tilde{P}(x) = 2 \frac{P(2|x|+1)}{1 + |x|}.\] Then it is not hard to see that $\tilde{P}$ satisfies the hypothesis of \thref{babyBMtheorem}, and so there exists a smooth function $g$ supported on an interval $(-a,a)$ of $\mathbb{R}$ which satisfies $g(0) \neq 0$ and has a Fourier transform obeying the estimate \[ |\hat{g}(x)| \leq \Tilde{P}(x).\] If $h$ is defined on the circle by the equation \[h(e^{it}) = g(t), \quad t \in (-\pi, \pi),\] then $h$ does not vanish at $1 \in \T$, is supported in an interval $J$ around $1$ which we might suppose is contained in $I$ (since support of $g$ can be chosen arbitrarily short), and it has a Fourier series $h = \sum_{n} h_ne^{itn}$ which satisfies \begin{equation}
\label{hfourierseriesestimate}
 |h_n| = |\hat{g}(n)| \leq \tilde{P}(n) = \frac{\alpha_n}{1+n},\end{equation} where $\alpha_n$ is the $n$:th moment of $G$ defined in \eqref{GmomentsEq}. Let $F(z) = \sum_{n=0}^\infty F_n z^n$, where \[F_n = -\frac{h_n}{\alpha_n}.\] Then the coefficients $F_n$ satisfy $|F_n| \leq \frac{1}{1+n}$, so that $F$ is an analytic function in $\D$. The computation \begin{align*}
     \int_\D |F(z)|^2 G(1-|z|) dA(z) & = \sum_{n \geq 0} |F_n|^2 \alpha_n \\ & \leq \sum_{n \geq 0} \frac{|\alpha_n|}{(1+n)^2} < \infty
 \end{align*} shows that $f$ is a member of $\Po^2(\mu_\D)$, since the moments $\alpha_n$ are certainly bounded. The Taylor coefficients of $F$ have been chosen in such a way that \[ \int_\D F(z) \conj{z}^n G(1-|z|) dA(z) = F_n \alpha_n = -h_n =  -\int_\T h \conj{z}^n d\m\] for non-negative integers $n$. This means precisely that the monomials $\{z^n\}_{n \geq 0} = \{z^n_\D + z^n_\T\}_{n \geq 0}$ are orthogonal to $F + h$, considered as an element of $\Po^2(\mu_\D) \oplus L^2(\m)$. Consequently the closed linear span of these monomials, which of course is $\Po^2(\mu_\D + \m)$, is orthogonal to $F+h$. The assumption that $\int_I \log w \, d\m > -\infty$ implies that we can construct a function $u \in H^\infty \subset \Po^2(\mu_\D + \m)$ which satisfies $|u| = \min(w,1)$ on the interval $I$. Then $z^n u$ is orthogonal to $F + h$ in $\Po^2(\mu_\D) \oplus L^2(\m)$. By boundedness of $u$ we have that $F\conj{u}$ is a member of $L^2(\mu_\D)$, and we can orthogonally project it onto $f_\D \in \Po^2(\mu_\D)$. We then compute 
 \begin{align*}
 0 & = \int_\D F\conj{u}\conj{z}^n G(1-|z|) dA + \int_\T h\conj{u}\conj{z}^n d\m \\ & = \int_\D f_\D \conj{z}^n G(1-|z|) dA + \int_I h\frac{\conj{u}}{w} \conj{z}^n w d\m.
 \end{align*} This means that if we set $f_\T := h\frac{\conj{u}}{w}1_I$, then $f_\T$ is a bounded measurable function, and the tuple $f_\D + f_\T  \in \Po^2(\mu_\D) \oplus L^2(\mu_\T)$ is orthogonal to $\Po^2(\mu)$. Moreover, $f_\T$ is non-zero almost everywhere on $J$. The proof is complete, by the remarks made in the first paragraph of the proof.

\end{proof}

\begin{proof}[Proof of \thref{maintheoremSuperExp}]

By \thref{FwSplittingOffProp}, the space $L^2(\mu_F)$ is contained in $\Po^2(\mu)$. The orthogonal complement of this space in $\Po^2(\mu)$ is clearly $\Po^2(\mu_\D + \mu_{\T \setminus F})$. By \thref{irreducibilityLemmaLogInt}, the space $\Po^2(\mu_\D + \mu_{\T \setminus F})$ is irreducible.
\end{proof}

\bibliographystyle{plain}
\bibliography{mybib}

\end{document}